

\documentclass[final,1p,times,twocolumn]{elsarticle}


\usepackage{amsmath, amssymb, amsthm}




\newtheorem{thm}{Theorem}[section]
\newtheorem{cor}[thm]{Corollary}
\newtheorem{lemma}[thm]{Lemma}
\newtheorem{prop}[thm]{Proposition}
\newtheorem{defn}[thm]{Definition}

\journal{arXiv}

\begin{document}

\begin{frontmatter}



\title{Some automorphism groups of finite $p$-groups}

 \author[label1]{Yassine Guerboussa}
 \author[label2]{Miloud Reguiat}
 \address[label1]{University Kasdi Merbah Ouargla, Ouargla, Algeria \\ {\tt Email: yassine\_guer@hotmail.fr}}
 \address[label2]{University Kasdi Merbah Ouargla, Ouargla, Algeria \\ {\tt Email: Miloud\_Reguiat@hotmail.fr}}



\begin{abstract}
 A conjecture of Berkovich asserts that every non-simple finite $p$-group has a non-inner automorphism of order $p$.  This conjecture is far from being proved despite the great effort devoted to it.  In this peper we prove it for $p$-groups of coclass 2, provided that $p$ is odd.  Some related results are also proved, and may be considered as interesting independently.

\end{abstract}

\begin{keyword}
{   automorphisms \sep finite $p$-groups \sep radical rings \sep adjoint groups }


\end{keyword}

\end{frontmatter}


\section{Introduction}

Automorphisms of finite $p$-groups are quite mysterious, and many old problems in this area are still open.  May be the most attractive are the divisibility conjecture stating that $|G|$ divides $|Aut(G)|$, if $|G|$ is greater than $p^2$.  And the question or the conjecture of Berkovich stating that every non-simple finite $p$-group has a non-inner automorphism of order $p$.
\\  This conjecture is confirmed for various classes of $p$-groups, regular $p$-groups by Schmid [12], see also Jamali [8] for $p=2$, powerful $p$-groups by Abdollahi [2],  and specially for $p$-groups of class not exceeding $3$ by Liebeck [10], Abdollahi [1] and Abdollahi et al [3 ].  Notably, Deaconescu and Silberberg [4 ] have reduced it to the case when the $p$-group $G$ is  strongly frattinan, that is $C_G(Z(\Phi(G)))=\Phi(G)$.   
\\If $G$ is a $p$-group of order $p^n$ and class $c$, then  the coclass of $G$ is equal to $r=n-c$.  Classifying $p$-groups by considering  the coclass as the primary invariant, has proved a great success.  We refer the reader to  C. Leedham-Green [9], for the details.  At this time, It's not clear how to studying the conjecture of Berkovich in the framework of coclass theory.  And this conjecture is almost studied for groups of small class or some close families, as for example considering $G/Z(G)$ to be powerful as done by A. Abdollahi in [2].
\\It's natural to consider this question for $p$-groups in the opposite side, namely for those with small coclass.  it's easy to see that is true in the maximal class case.  So the next candidate are $p$-groups of almost maximal class, or those of coclass 2.  In section 5, of this paper we prove that the conjecture is true for these $p$-groups provided that $p$ is odd.
\\The reader may not consider the previous as the main result in the paper.  In section 3, and under a suitable condition, we prove a result about the structure of some automorphism groups.  Namely, about the group of automorphisms centralizing $G/H$, where $H$ denotes the full inverse image in $G$ of $\Omega_1(\zeta_2(G)/\zeta_1(G))$. These automorphisms may be considered as the farthest removed from central automorphisms ( see [6]).
\\Before, and in section 2, we have to list some preliminary results which we shall need in the following sections.  The main idea is to consider the additive group $Der(G,A)$ of derivations from $G$ into $A$ as ring, when $A$ is a normal abelian subgroup of $G$, viewed as a $G$-module via conjugation.  And also considering the adjoint group of this ring as a suitable group of automorphisms.  This allows us to capture some of the properties of this automorphism group by means of the properties of the associated ring.
\\A natural sequence of ring homomorphisms 
$$ 0  \rightarrow Der(G,B)  \rightarrow Der(G,A) \rightarrow Der(G/B,A/B)$$ 
arise when $B$ is a normal subgroup of $G$, such that $B \leq A$ and $B^{\delta}\subset B$ for every $ \delta \in Der(G,A)$.  
\\In section 4, we shall caracterize the $p$-groups, $p$ odd, with an exact sequence of the obove type, under the assumption that $A$ is elementary abelian of rank 2. 
\\Throughout this paper, the unexplaned natation is standard in the leterature.  We denote by $\zeta_i(G)$ and $\gamma_i(G)$, the terms of the upper and the lower central series of the group $G$, respectively.  And $d(G)$ will denote the minimal number of generators of $G$. Following A. Mann, we say that a $p$-group $G$ is $p$-central if every element of order $p$ (of order 4, if $p=2$) in $G$ is central.  If $N$ is a normal subgroup of $G$, we denote by $Aut_N(G)$ the group of automorphisms $\sigma$ of $G$ having the property $x^{-1}\sigma(x) \in N$, for all $x \in G$.
\\If $R$ is a ring, then we donote by $R^+$ it's addative group, and by $R^{\circ}$ it's adjoint group.  The $nth$ power of an element $x$ in the group  $R^{\circ}$ is denoted by $x^{(n)}$.
 
\section{\bf Preliminaries}

  Let $R$ be a (associative) ring, not necessarily with an identity element.  The set of all elements of $R$ forms a monoid with identity element $0 \in R$ under the circle composition $x\circ y=x+y+xy$, this monoid is called the adjoint monoid or semi-group of the ring $R$.  The adjoint group $ R^\circ$ of the ring $R$ is the group of invertible elements in the previous monoid.   Following Jacobson, we say that $R$ is radical if his adjoint semi-group is a group, or equivalently $ R^\circ = R$.

The $n th$ power $R^n$ of $R$ is the additive subgroup generated by the set of all products of $n$ elements of $R$.  We say that $R$ is nilpotent if  $R^{n+1}=0$ for some non-negative integer $n$, the least $n$ satisfying $R^{n+1}=0$ is called the degree of nilpotency of the ring $R$.  Notice that every nilpotent ring $R$ is radical since for every $x \in R$ we have$$ x\circ \sum_{i}(-1)^ix^i=(\sum_{i}(-1)^ix^i) \circ x =0.$$
\\The Jacobson radical of the ring $R$ is the largest ideal of $R$ contained in the adjoint group $R^\circ$.  This imply that $R$ is radical if and only if it coincides with its Jacobson radical.  By a classical result the Jacobson radical of an artinian ring is nilpotent, hence every artinian (in particular finite) radical ring is nilpotent.
\begin{lemma}
The adjoint group of a nilpotent ring $R$ is nilpotent of class at most equal to the degree of nilpotency of $R$.
\end{lemma}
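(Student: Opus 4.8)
The plan is to realise the adjoint group $R^{\circ}$ as a subgroup of the unit group of a ring with identity, where the circle composition becomes ordinary multiplication, and then to run the standard filtration argument that ties the lower central series of this group to the powers $R^{k}$. Concretely, I would adjoin an identity to $R$, forming the unitization $R_{1}=\mathbb{Z}\cdot 1\oplus R$, and consider the map $x\mapsto 1+x$. Since $(1+x)(1+y)=1+(x+y+xy)=1+(x\circ y)$, this map is an isomorphism of the adjoint group $R^{\circ}$ onto the multiplicative subset $1+R\subseteq R_{1}$; that $1+R$ is a group is exactly the statement that $R$ is radical, already noted for nilpotent $R$. It therefore suffices to bound the nilpotency class of $1+R$.

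The key step is the inclusion $[\,1+R^{i},\,1+R^{j}\,]\subseteq 1+R^{i+j}$ for all $i,j\geq 1$, where the bracket denotes the subgroup generated by the group commutators. Each $R^{k}$ is a two-sided ideal, so $1+R^{k}$ is a normal subgroup of $1+R$. For $u=1+x$ with $x\in R^{i}$ and $v=1+y$ with $y\in R^{j}$, I would first compute $uv-vu=xy-yx\in R^{i+j}$, and then use the identity $[u,v]-1=u^{-1}v^{-1}(uv-vu)$, which follows from $u^{-1}v^{-1}vu=1$. Because $u^{-1},v^{-1}\in 1+R$ and $R^{i+j}$ is an ideal, the right-hand side lies in $R^{i+j}$; hence every generating commutator lies in $1+R^{i+j}$, and since the latter is a subgroup the whole bracket is contained in it.

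With this in hand the result follows by a short induction on the lower central series. Writing $\gamma_{k}=\gamma_{k}(1+R)$, we have $\gamma_{1}=1+R=1+R^{1}$; and if $\gamma_{k}\subseteq 1+R^{k}$ then $\gamma_{k+1}=[\gamma_{k},1+R]\subseteq[\,1+R^{k},1+R^{1}\,]\subseteq 1+R^{k+1}$. Thus $\gamma_{k}(R^{\circ})\subseteq 1+R^{k}$ for every $k$, and if $n$ denotes the degree of nilpotency, so that $R^{n+1}=0$, then $\gamma_{n+1}(R^{\circ})\subseteq 1+R^{n+1}=\{1\}$, which says precisely that $R^{\circ}$ is nilpotent of class at most $n$. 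I do not expect a genuine obstacle here: the argument is entirely formal, and the only care required is the bookkeeping that the $R^{k}$ are ideals, that the $1+R^{k}$ are subgroups, and that the commutator identity is applied with inverses taken inside $1+R$.
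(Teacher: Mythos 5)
Your proof is correct. There is, however, no proof in the paper to compare it against: Lemma 2.1 is stated as a classical preliminary without argument, and the only computation in its vicinity is the verification, just before the statement, that a nilpotent ring is radical via the geometric-series inverse $\sum_{i}(-1)^{i}x^{i}$. Your argument is the standard one, and every step checks out: you embed $R^{\circ}$ into the unit group of the unitization $R_{1}=\mathbb{Z}\cdot 1\oplus R$ via $x\mapsto 1+x$; you establish the key inclusion $[\,1+R^{i},\,1+R^{j}\,]\subseteq 1+R^{i+j}$ from the fact that each $R^{k}$ is a two-sided ideal of $R_{1}$ (not merely of $R$ --- this is what legitimizes both the normality of $1+R^{k}$ and the absorption $u^{-1}v^{-1}(uv-vu)\in R^{i+j}$), together with the identity $[u,v]-1=u^{-1}v^{-1}(uv-vu)$; and you then induct along the lower central series to get $\gamma_{n+1}(R^{\circ})\subseteq 1+R^{n+1}=\{1\}$ when $R^{n+1}=0$, which matches the paper's convention that the degree of nilpotency is the least such $n$. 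The one piece of bookkeeping you wave at rather than spell out --- that $1+R^{k}$ is a subgroup --- is immediate because $R^{k}$ is itself a nilpotent ring, hence radical, so its image under $x\mapsto 1+x$ is closed under products and inverses. An alternative would be to run the same filtration argument intrinsically with the circle operation (the sets $R^{k}$ are closed under $\circ$ and circle-inversion, and the analogous commutator computation goes through), which avoids adjoining an identity; but the unitization, as you use it, makes the commutator arithmetic cleaner, and either route yields the same bound.
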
 

 Following [6],  we say that a ring $R$ is right (left, resp) $p$-nil if every element $x \in R$ satisfying $ p x =0$  is a right (left, resp) annihilator of $R$. The ring $R$ is said to be $p$-nil if it is right and left $p$-nil.
\\For example, for every ring $R$ the subring $S=pR$ is $p$-nil.  Also we check easily that the left and the right annihilators of  $\Omega_1(R^+)$ are respectively right and left $p$-nil.
\\ The following three results are taken from [6].
\begin{lemma}
Let $R$ be a ring with an additive group of finite exponent $p^m$.  If $R$ is  left or right $p$-nil then $R$ is nilpotent of degree at most equal to $m$.  In particular the adjoint group $R^\circ$ is nilpotent of class at most equal to $m$.
\end{lemma}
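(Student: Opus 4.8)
The plan is to filter $R$ by the additive subgroups $\Omega_i = \{x \in R : p^i x = 0\}$ and to show that multiplication pushes an element one step down this filtration. Since $R^+$ has exponent $p^m$, we have the ascending chain $0 = \Omega_0 \subseteq \Omega_1 \subseteq \cdots \subseteq \Omega_m = R$. Assume first that $R$ is left $p$-nil. The crucial observation is that
$$\Omega_i R \subseteq \Omega_{i-1} \quad (1 \le i \le m), \qquad \text{and in particular} \qquad \Omega_1 R = 0.$$

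To establish this I would take a generator $ab$ of $\Omega_i R$, with $a \in \Omega_i$ and $b \in R$. Then $p^{i-1}a \in \Omega_1$, so the left $p$-nil hypothesis gives $(p^{i-1}a)R = 0$; in particular $p^{i-1}(ab) = (p^{i-1}a)b = 0$, whence $ab \in \Omega_{i-1}$. As $\Omega_{i-1}$ is an additive subgroup, the inclusion $\Omega_i R \subseteq \Omega_{i-1}$ follows, and the case $i = 1$ is exactly the defining property of left $p$-nil applied to elements annihilated by $p$.

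Next I would run a straightforward induction on $k$ to prove $R^k \subseteq \Omega_{m-k+1}$. The base case $R^1 = R = \Omega_m$ is the exponent hypothesis, and the inductive step is $R^{k+1} = R^k R \subseteq \Omega_{m-k+1} R \subseteq \Omega_{m-k}$. Taking $k = m$ yields $R^m \subseteq \Omega_1$, and hence $R^{m+1} = R^m R \subseteq \Omega_1 R = 0$, so $R$ is nilpotent of degree at most $m$. The right $p$-nil case is entirely symmetric: one proves $R\Omega_i \subseteq \Omega_{i-1}$ and $R\Omega_1 = 0$ by peeling a factor $p^{i-1}$ off the right-hand argument, and the same induction gives $R^{m+1} = 0$. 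The assertion about the adjoint group is then immediate from the preceding lemma on nilpotent rings, since $R^\circ$ is nilpotent of class at most the degree of nilpotency of $R$, which we have bounded by $m$.

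The only delicate point is the containment $\Omega_i R \subseteq \Omega_{i-1}$: the hypothesis only controls elements annihilated by $p$, i.e.\ elements of $\Omega_1$, so one must first multiply by $p^{i-1}$ to land inside $\Omega_1$ before the $p$-nil condition can be invoked, and then transfer the resulting annihilation back to the product $ab$. Once this peeling step is in place, the remainder is a routine telescoping induction requiring no further structural input on $R$.
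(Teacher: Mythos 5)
Your proof is correct. Note that the paper itself gives no proof of this lemma: it is one of the three results quoted from reference [6], so there is no internal argument to compare against. Your filtration argument is sound and self-contained: the chain $0=\Omega_0\subseteq\Omega_1\subseteq\cdots\subseteq\Omega_m=R$, the peeling step $p^{i-1}(ab)=(p^{i-1}a)b=0$ giving $\Omega_iR\subseteq\Omega_{i-1}$ (resp.\ $R\,\Omega_i\subseteq\Omega_{i-1}$ in the right $p$-nil case), and the telescoping induction $R^k\subseteq\Omega_{m-k+1}$ together yield $R^{m+1}=R^mR\subseteq\Omega_1R=0$, after which the statement on $R^\circ$ follows from the paper's Lemma 2.1 exactly as you say. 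This is the natural proof of the statement, and it correctly handles both annihilation conventions by symmetry, so nothing is missing.
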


\begin{lemma}
Let $R$ be a $p$-ring, $p$ odd.  If $R$ is left or right $p$-nil, then $\Omega_{\{n\}}(R^\circ)=\Omega_n(R)$,  for every $n \geq 1$.  In particular we have $\Omega_n(R^\circ)=\Omega_{\{n\}}(R^\circ)$.  
\end{lemma}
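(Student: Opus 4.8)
The plan is to pass to the unitization and express adjoint powers through binomial coefficients, then prove the two inclusions $\Omega_n(R)\subseteq \Omega_{\{n\}}(R^\circ)$ and $\Omega_{\{n\}}(R^\circ)\subseteq \Omega_n(R)$ separately. Let $p^m$ be the exponent of $R^+$, so that by the preceding lemma $R$ is nilpotent. Embedding $R^\circ$ into the unit group of $A=(\mathbb{Z}/p^m)\oplus R$ via $x\mapsto 1+x$ turns the circle operation into ordinary multiplication, since $(1+x)(1+y)=1+(x\circ y)$; hence
\[
x^{(p^n)}=(1+x)^{p^n}-1=\sum_{k\ge 1}\binom{p^n}{k}x^{k},
\]
a finite sum by nilpotency. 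The arithmetic input is the valuation $v_p\!\binom{p^n}{k}=n-v_p(k)$ together with the trivial bound $v_p(k)\le k-1$. I would also record once the order-dropping mechanism coming from the $p$-nil hypothesis: if $p^{a}y=0$ then, putting $z=p^{a-k}y^{k}$, one has $pz=0$, hence $Rz=0$ (right $p$-nil), whence $yz=p^{a-k}y^{k+1}=0$; by induction $p^{\max(a-k+1,0)}y^{k}=0$ (the left case is symmetric). Note that neither this nor the forward inclusion below uses that $p$ is odd.

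For the forward inclusion, take $x\in\Omega_n(R)$, i.e.\ $p^{n}x=0$. Order-dropping gives $p^{\,n-k+1}x^{k}=0$, while $v_p\!\binom{p^n}{k}=n-v_p(k)\ge n-k+1$ because $v_p(k)\le k-1$. Consequently every summand $\binom{p^n}{k}x^{k}$ vanishes, so $x^{(p^n)}=0$ and $x\in\Omega_{\{n\}}(R^\circ)$.

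The reverse inclusion is the hard part. First I would reduce to the case where $R$ is commutative and generated by $x$: the subring $S=\sum_{k\ge 1}\mathbb{Z}x^{k}$ is again a $p$-nil $p$-ring and carries the same adjoint powers of $x$. Then I induct on the nilpotency degree $c$ of $S$, the base $S^{2}=0$ being trivial since there the two operations coincide. Passing to $S/S^{c}$ (of smaller degree), the inductive hypothesis gives $p^{n}x\in S^{c}$; as $S\cdot S^{c}=0$, this already forces $p^{n}x^{k}=0$ for $k\ge 2$. Substituting back, the summands with $p\nmid k$ (coefficient valuation exactly $n$) vanish, and one is reduced to killing the residue $v:=p^{n}x=-\sum_{p\mid k,\,k\ge 2}\binom{p^n}{k}x^{k}\in S^{c}$. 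This residual term is precisely the crux: for $p\mid k$ the coefficient valuation $n-v_p(k)$ is strictly below $n$, so $p^{n}x^{k}=0$ no longer suffices and one needs the finer estimate upgrading $p^{n}x^{k}=0$ to $p^{\,n-v_p(k)}x^{k}=0$. The most dangerous summand is $k=p$ (valuation $n-1$), and it is exactly here that $p$ odd is indispensable; for $p=2$ the statement is false, as the ring $2\mathbb{Z}/8\mathbb{Z}$ shows, where $x=\bar 2$ satisfies $x^{(2)}=2x+x^{2}=0$ although $2x\ne 0$.

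Finally, the \emph{in particular} follows formally: once $\Omega_{\{n\}}(R^\circ)=\Omega_n(R)$, the right-hand side is visibly closed under $\circ$ (by order-dropping $p^{n}x=p^{n}y=0$ gives $p^{n}(x+y)=0$ and $p^{n}xy=0$, and the adjoint inverse stays in the set), so it is a subgroup of $R^\circ$ and hence equals the subgroup $\Omega_n(R^\circ)$ it generates. I expect the genuine obstacle to be exactly the residue-killing step for $k$ divisible by $p$, which is where the oddness of $p$ has to be used.
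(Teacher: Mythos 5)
First, a point of order: the paper contains no proof of this lemma at all --- it is one of the ``three results taken from [6]'' --- so there is no in-paper argument to compare against, and your proposal must stand on its own. It does not, and you concede as much. The inclusion $\Omega_n(R)\subseteq\Omega_{\{n\}}(R^\circ)$ and the formal ``in particular'' step are correct (and your observation that this direction works for $p=2$ as well, with $2\mathbb{Z}/8\mathbb{Z}$ witnessing the failure of the converse, is a nice touch). But the reverse inclusion $\Omega_{\{n\}}(R^\circ)\subseteq\Omega_n(R)$ is the entire substance of the lemma --- it is the only place $p$ odd can enter --- and you reduce it to ``killing the residue'' $p^nx=-\sum_{p\mid k,\,k\ge2}\binom{p^n}{k}x^k$ and then stop, saying you \emph{expect} this to be the obstacle. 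Localizing a difficulty is not resolving it. There is also an unacknowledged hole earlier in that same argument: your induction applies the lemma to $S/S^{c}$, but the lemma's hypothesis is $p$-nilness, and that property does not formally pass to quotients. From $py\in S^{c}$ one only gets that each $x^{i}y$ is killed by $p$ (hence annihilates $S$), not that $x^{i}y\in S^{c}$, which is what right $p$-nilness of $S/S^{c}$ would require; this step needs its own proof or a restructured induction.

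The frustrating part is that the two tools you set up correctly --- the order-dropping estimate and $v_p\binom{p^n}{k}=n-v_p(k)$ --- already close the gap, with no reduction to $S$, no induction on the nilpotency degree, and no quotients. Suppose $x^{(p^n)}=0$ and let $p^{a}$ be the additive order of $x$; assume for contradiction that $a\ge n+1$. Multiply $0=\sum_{k\ge1}\binom{p^n}{k}x^{k}$ by $p^{\,a-n-1}$. The $k=1$ term becomes $p^{a-1}x$, which is nonzero by the choice of $a$. For $k\ge2$ the $k$-th term carries $p$ to the exponent $(a-n-1)+(n-v_p(k))=a-1-v_p(k)$, while your order-dropping gives $p^{\max(a-k+1,0)}x^{k}=0$; so that term vanishes as soon as $k-v_p(k)\ge2$ (the terms with $k\ge a+1$ vanish outright since $x^{a+1}=0$). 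The inequality $k-v_p(k)\ge2$ for all $k\ge2$ holds exactly when $p$ is odd: for $v_p(k)=0$ it reads $k\ge2$, and for $v_p(k)=j\ge1$ it follows from $k\ge p^{j}\ge3^{j}\ge j+2$; for $p=2$ it fails precisely at $k=2$, which is your counterexample. Hence every term with $k\ge2$ dies, leaving $p^{a-1}x=0$, a contradiction; so $a\le n$ and $x\in\Omega_n(R)$. Note, incidentally, that the binding case of the inequality is $k=2$, not $k=p$ as you guessed: for $p\ge5$ the summand $k=p$ has plenty of room, and it is the quadratic term that governs where oddness is needed.
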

Now, let $G$ be a group and $A$ be a normal abelian subgroup of $G$.  Then $A$ can be viewed as a $G$-module via conjugation.  Let denote by $Der(G,A)$ the additive group of derivations from $G$ into $A$, that are the mapping  $\delta:G\rightarrow A$ having the property  $\delta(xy)=\delta(x)^y \delta(y)$, for all $x, y \in G$.  This additive group endowed with the multiplication  $ \delta_1 \delta_2(x)= \delta_2(\delta_1(x))$ has a structure of a ring.
\\ Let $Aut_A(G)$ be the group of the  automorphisms $\sigma$ of $G$ having the property $x^{-1}\sigma(x) \in A$, for all $x \in G$, then we have the following relation.
\begin{prop}
Let $G$ be a group and $A$ be an abelian normal subgroup of $G$.  Then the mapping $\sigma\mapsto \delta_{\sigma}  $, with $\delta_{\sigma}(x)= x^{-1}\sigma(x)$ determines an isomorphism between the group $Aut_A(G)$ and the adjoint group of the ring $Der(G,A)$.
\end{prop}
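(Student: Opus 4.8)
The plan is to exhibit an explicit two-sided inverse of $\sigma\mapsto\delta_\sigma$ and then to recognise the whole correspondence as the restriction of a monoid isomorphism to its groups of units. Write the module action multiplicatively, so that $a^y=y^{-1}ay$ for $a\in A$, $y\in G$. First I would check that $\delta_\sigma$ really is a derivation: for $\sigma\in Aut_A(G)$ one computes $\delta_\sigma(xy)=(xy)^{-1}\sigma(x)\sigma(y)=y^{-1}\bigl(x^{-1}\sigma(x)\bigr)y\cdot y^{-1}\sigma(y)=\delta_\sigma(x)^y\,\delta_\sigma(y)$, using only that $A$ is abelian and normal (and that $\sigma$ is a homomorphism with $x^{-1}\sigma(x)\in A$). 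Conversely, to any $\delta\in Der(G,A)$ I would attach the map $\sigma_\delta$ defined by $\sigma_\delta(x)=x\,\delta(x)$; the cocycle identity $\delta(xy)=\delta(x)^y\delta(y)$ gives $\sigma_\delta(x)\sigma_\delta(y)=xy\,\delta(x)^y\delta(y)=xy\,\delta(xy)=\sigma_\delta(xy)$, so $\sigma_\delta$ is an endomorphism of $G$ inducing the identity on $G/A$. Write $End_A(G)$ for the monoid, under composition, of all such endomorphisms.

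The two assignments are mutually inverse bijections between $Der(G,A)$ and $End_A(G)$, since $\delta_{\sigma_\delta}(x)=x^{-1}x\,\delta(x)=\delta(x)$ and $\sigma_{\delta_\sigma}(x)=x\,x^{-1}\sigma(x)=\sigma(x)$. The heart of the argument is to show that this bijection carries the circle operation to composition. Expanding the composite $\sigma_{\delta_2}\circ\sigma_{\delta_1}$ (apply $\sigma_{\delta_1}$ first), and using that $\sigma_{\delta_2}$ is an endomorphism together with $\delta_1(x)\in A$, one obtains $x\,\delta_2(x)\,\delta_1(x)\,\delta_2(\delta_1(x))$; since the factors after $x$ lie in the abelian group $A$ they may be reordered, so this equals $x\,\bigl(\delta_1+\delta_2+\delta_1\delta_2\bigr)(x)=x\,(\delta_1\circ\delta_2)(x)=\sigma_{\delta_1\circ\delta_2}(x)$, where I recall the paper's convention $\delta_1\delta_2(x)=\delta_2(\delta_1(x))$. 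Together with $\sigma_0=\mathrm{id}$, this shows $\delta\mapsto\sigma_\delta$ is an isomorphism of monoids from the adjoint monoid of $Der(G,A)$ onto $End_A(G)$, the composition on $End_A(G)$ being taken in the order matching this convention.

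Finally I would match up the groups of units. A monoid isomorphism carries units to units, and the units of the adjoint monoid are by definition the adjoint group $Der(G,A)^{\circ}$, so it remains to identify the units of $End_A(G)$ with $Aut_A(G)$. An element of $End_A(G)$ is a unit precisely when it is a bijective endomorphism whose compositional inverse again lies in $End_A(G)$, so I must check that $Aut_A(G)$ is closed under inversion. For $\sigma\in Aut_A(G)$ one first gets $\sigma(A)\subseteq A$ from $\sigma(a)=a\,(a^{-1}\sigma(a))$, whence $\sigma$ induces the identity $\bar\sigma$ on $G/A$; applying $\bar\sigma=\mathrm{id}$ to a preimage of an element of $A$ under $\sigma$ shows $\sigma^{-1}(A)\subseteq A$ as well, so $\sigma(A)=A$ and $\sigma^{-1}$ also induces the identity on $G/A$, giving $\sigma^{-1}\in Aut_A(G)$. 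Thus the units of $End_A(G)$ are exactly $Aut_A(G)$, and restricting the monoid isomorphism to units yields the desired group isomorphism $Aut_A(G)\cong Der(G,A)^{\circ}$.

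I expect the main obstacle to be bookkeeping rather than conceptual: getting the order of composition to agree with the convention $\delta_1\delta_2(x)=\delta_2(\delta_1(x))$ in the circle-to-composition computation (otherwise one produces an anti-isomorphism), and the verification that $Aut_A(G)$ is closed under inversion, are the two points that genuinely need care; everything else is a direct application of the derivation identity and the abelianness of $A$.
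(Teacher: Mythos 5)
Your proof is correct, and there is in fact nothing in the paper to compare it against: the proposition is the last of the ``three results taken from [6]'' (the authors' companion paper on central automorphisms and nilpotent rings), so the paper states it without proof. Your argument is the natural one and is complete: the check that $\delta_\sigma$ is a derivation, the inverse assignment $\sigma_\delta(x)=x\,\delta(x)$, the mutual-inverse computation, and the identity $\sigma_{\delta_2}\circ\sigma_{\delta_1}=\sigma_{\delta_1\circ\delta_2}$ are all correct, and passing to unit groups of the monoid isomorphism is a clean way to finish. The two points you single out as delicate are exactly the right ones, and you handle both properly. First, with the paper's convention $\delta_1\delta_2(x)=\delta_2(\delta_1(x))$, the map $\delta\mapsto\sigma_\delta$ is an isomorphism only if composition in $Aut_A(G)$ is read ``first map first'' (automorphisms acting on the right); under the usual left-action convention one gets an anti-isomorphism, which still yields the claimed isomorphism of groups via $g\mapsto g^{-1}$, so the proposition is safe either way, but your explicit flag of the convention is the honest way to state it. Second, the closure of $Aut_A(G)$ under inversion (so that $Aut_A(G)$ is precisely the unit group of your monoid $End_A(G)$) genuinely requires the small argument you give via the induced identity map on $G/A$, since $G$ here is an arbitrary group and one cannot appeal to finiteness to conclude that a bijective endomorphism in $End_A(G)$ has its inverse there too.
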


Let $B$ be a normal subgroup of $G$, such that $B \leq A$ and $B^{\delta}\subset B$ for every $ \delta \in Der(G,A)$.  Then we have a canonical embedding of the ring $Der(G,B)$ in $Der(G,A)$.  
\\Also $A/B$ is a $G/B$-module under the usual action and the map $ \delta\mapsto \tilde{\delta}  $ defined from $Der(G,A)$ to $Der(G/B,A/B)$ by $\tilde{\delta}(xB)=\delta(x)B$ is ring homomorphism.
\begin{lemma}
Under the above notation, we have the following exact sequence of ring homomorphisms $$ 0  \rightarrow Der(G,B)  \rightarrow Der(G,A) \rightarrow Der(G/B,A/B)$$   
\end{lemma}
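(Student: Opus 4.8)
The plan is to verify the two conditions that together amount to exactness of the displayed sequence: that the canonical map $\iota\colon Der(G,B)\to Der(G,A)$ is injective, and that at the middle term the image of $\iota$ equals the kernel of the reduction homomorphism $\pi\colon Der(G,A)\to Der(G/B,A/B)$, $\delta\mapsto\tilde\delta$. Since the arrow out of $0$ contributes nothing, exactness at $Der(G,B)$ is just injectivity of $\iota$, and exactness at $Der(G,A)$ is the equality $\operatorname{im}\iota=\ker\pi$; there is no claim of exactness on the right, which is why the sequence is not continued by an arrow to $0$.

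First I would dispose of injectivity. The map $\iota$ sends a derivation $\delta\colon G\to B$ to the same function regarded as taking values in $A$, which is legitimate because $B\le A$; two such derivations agreeing as maps into $A$ agree as functions on $G$, so $\ker\iota=0$. In passing one checks that $\iota$ respects the multiplication $\delta_1\delta_2(x)=\delta_2(\delta_1(x))$: for $\delta_1,\delta_2\in Der(G,B)$ the element $\delta_1(x)$ already lies in $B\le G$, so the composite value $\delta_2(\delta_1(x))$ is computed identically in either ring.

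The core step is to identify $\ker\pi$. By definition $\tilde\delta(xB)=\delta(x)B$, so $\delta\in\ker\pi$ exactly when $\delta(x)\in B$ for every $x\in G$; that is, $\ker\pi$ is the set of derivations $G\to A$ whose image lies in $B$. It then remains to recognise this set as $\operatorname{im}\iota$: a derivation $G\to A$ with values in $B$ is precisely a derivation $G\to B$, because the defining identity $\delta(xy)=\delta(x)^y\delta(y)$ only invokes the conjugation action of $G$, under which $B$ is stable by normality. Conversely every element of $\operatorname{im}\iota$ has image in $B$ by construction, so $\operatorname{im}\iota=\ker\pi$ and the sequence is exact.

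I do not expect a genuine obstacle, the argument being formal; the one place the standing hypotheses are really used is the well-definedness of $\pi$, which underlies the whole statement. If $xB=x'B$ with $x'=xb$ and $b\in B$, then $\delta(x')=\delta(x)^b\,\delta(b)$, and since $A$ is abelian $\delta(x)^b=\delta(x)$, while the assumption $B^{\delta}\subset B$ yields $\delta(b)\in B$; hence $\delta(x')B=\delta(x)B$. The main thing to stay careful about is the twofold role of $B$, as the submodule that reduction kills and as a $\delta$-invariant set, and to resist claiming surjectivity of $\pi$, which in general fails.
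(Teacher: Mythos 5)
Your proof is correct and is exactly the routine verification the paper leaves implicit --- its own proof of this lemma is the single word ``straightforward.'' You correctly identify the two exactness conditions, and you rightly pinpoint where the standing hypotheses enter: normality of $B$ and commutativity of $A$ for $\delta(x)^b=\delta(x)$, and the assumption $B^{\delta}\subset B$ for the well-definedness of the reduction map $\delta\mapsto\tilde{\delta}$.
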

\begin{proof}
straightforward.
\end{proof}
We also need the following easy observations.
\begin{lemma}
If $G$ is a finite $p$-group, then every maximal subgroup $M$ of $G$ occurs as the kernel of a homomorphism  $r :G \rightarrow \mathbb{Z}_p$.
\end{lemma}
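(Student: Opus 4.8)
The plan is to exploit the standard structure of maximal subgroups of a finite $p$-group, so that the statement reduces to producing the quotient $G/M$ explicitly. First I would observe that since $G$ is a finite $p$-group it is nilpotent, and therefore satisfies the normalizer condition: every proper subgroup is properly contained in its normalizer. Applying this to the maximal subgroup $M$ gives $M \subsetneq N_G(M)$, and maximality forces $N_G(M) = G$; hence $M$ is normal in $G$.

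Next I would pass to the quotient $G/M$. By the correspondence theorem, the subgroups of $G/M$ are in bijection with the subgroups of $G$ lying between $M$ and $G$, and maximality of $M$ means there are none besides $M$ and $G$ themselves. Thus $G/M$ is a nontrivial group with no proper nontrivial subgroups, so it is cyclic of prime order; since $G/M$ is a $p$-group this prime is $p$, giving $G/M \cong \mathbb{Z}_p$. Fixing such an isomorphism $\theta : G/M \to \mathbb{Z}_p$ and composing it with the canonical projection $\pi : G \to G/M$ produces the homomorphism $r = \theta \circ \pi : G \to \mathbb{Z}_p$, whose kernel is exactly $\ker \pi = M$, as required.

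Every step rests only on elementary facts (nilpotency of finite $p$-groups, the correspondence theorem, and the classification of groups of prime order), so there is no genuine obstacle here; the lemma is essentially a repackaging of the fact that maximal subgroups of $p$-groups are normal of index $p$. As an alternative route, I could instead work through the Frattini subgroup: since $M \supseteq \Phi(G)$, the image $M/\Phi(G)$ is a hyperplane in the $\mathbb{F}_p$-vector space $G/\Phi(G)$, hence the kernel of a nonzero linear functional, and pulling this functional back along $G \to G/\Phi(G)$ yields the desired map $r$.
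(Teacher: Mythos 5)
Your proof is correct and is essentially the same as the paper's: both arguments rest on the fact that $M$ is normal of index $p$, and the paper's explicit map $r(g^i m) = i \bmod p$ (for $g \in G - M$) is exactly your composition of the projection $G \to G/M$ with an isomorphism $G/M \cong \mathbb{Z}_p$. Your write-up just makes the standard facts (normality via the normalizer condition, $G/M$ cyclic of order $p$) explicit where the paper leaves them implicit.
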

\begin{proof}
 Let be $g \in G-M$.  Then $G=<g>M$ and so every element of $G$ can be written in the form $g^im$, with $m \in M$.  The map $r(g^im)=i\mbox{ mod }p$, defines a homomorphism from $G$ to $ \mathbb{Z}_p$ with a kernel equal to $M$.
\end{proof}
\begin{lemma}
If $G$ is a purely non-abelian finite $p$-group, then  $\Omega_1(Z(G))\leq  \Phi (G)$. 
\end{lemma}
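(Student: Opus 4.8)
The plan is to argue by contradiction. Suppose $\Omega_1(Z(G)) \not\leq \Phi(G)$. Since $\Phi(G)$ is a subgroup and $\Omega_1(Z(G))$ is generated by central elements of order dividing $p$, at least one such generator must fall outside $\Phi(G)$; that is, there is a central element $z$ of order exactly $p$ with $z \notin \Phi(G)$. Because $\Phi(G)$ is the intersection of all maximal subgroups of $G$, the element $z$ must escape some maximal subgroup $M$, so $z \notin M$.

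First I would record the elementary facts about $M$. As $M$ is maximal in the finite $p$-group $G$, it is normal of index $p$, so $G/M \cong \mathbb{Z}_p$; equivalently, by the preceding lemma on maximal subgroups, $M$ is the kernel of a homomorphism $r : G \rightarrow \mathbb{Z}_p$. Since $z \notin M$, its image $r(z)$ generates $\mathbb{Z}_p$, whence $z, z^2, \dots, z^{p-1}$ all lie outside $M$. Because $\langle z \rangle$ has order $p$, this forces $\langle z \rangle \cap M = 1$, and since $z$ generates $G/M$ we also obtain $G = \langle z \rangle M$.

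The key step is then to recognise this decomposition as an internal direct product. Indeed $z$ is central, so $\langle z \rangle$ is normal and centralises $M$; combined with $G = \langle z \rangle M$ and $\langle z \rangle \cap M = 1$, this yields $G = \langle z \rangle \times M$. But $\langle z \rangle$ is a nontrivial abelian direct factor, contradicting the hypothesis that $G$ is purely non-abelian. This contradiction establishes $\Omega_1(Z(G)) \leq \Phi(G)$.

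I do not anticipate a serious obstacle here: the argument uses only the standard description of $\Phi(G)$ as the intersection of maximal subgroups together with the preceding lemma. The single point demanding care is the passage from $G = \langle z \rangle M$ with trivial intersection to a genuine \emph{direct} product, which relies on the centrality of $z$ (so that the two factors commute elementwise), not merely on the normality of $\langle z \rangle$.
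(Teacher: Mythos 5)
Your proof is correct and follows essentially the same route as the paper's: both locate a central element $z$ of order $p$ outside some maximal subgroup $M$ and conclude $G = \langle z\rangle \times M$, contradicting pure non-abelianness. Your write-up merely makes explicit the details the paper leaves implicit (the trivial intersection, and that centrality of $z$ upgrades $G = \langle z\rangle M$ to a direct product).
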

\begin{proof}
Let $z \in  \Omega_1(Z(G))$.  If there exists a maximal subgroup $M$ such that $z \not \in M$, then $G=<z> \times M$. Thus $G$  is not purely non-abelian. 
\end{proof}
\section{\bf {\bf \em{\bf Almost central automorphisms}}}
\vskip 0.4 true cm
Let $H$ be the full inverse image in $G$ of $\Omega_1(\zeta_2(G)/\zeta_1(G))$.  The subgroup $H$ arise naturaly in the study of  central automorphisms of $G$, since the inner central automorphisms of order $p$, are induced exactely by $H$.
\\It's well known that $d(G)d(\zeta(G)) \neq d(H/\zeta_1(G))$ imply that $G$ has a non-inner central automorphism of order $p$.  In fact this condition is also necessary  provided $p>2$ (see [6]).  The $p$-groups of maximal class are straightforward examples satisfying this condition.  Note that $p$-central $p$-groups also satisfy this inequality.  This follows easily from a theorem of Thompson (see [7], III, 12.2, Hilfssatz), and from a theorem of A. Mann [11] fo $p=2$.
\\Here, we prove structural results about the group $Aut_H(G)$, under a suitable condition, namely for finite $p$-groups $G$ satisfying $C_G(\Phi(G)) \leq \Phi(G)$.  It seems that this condition plays the same role in studying  $Aut_H(G)$, as $\zeta_1(G)\leq \Phi(G)$ for central automorphisms (see [6]).   
\begin{thm}
Let $G$ be a finite $p$ group such that $C_G(\Phi(G)) \leq \Phi(G)$.  Let $H$ be  the full inverse image in $G$ of $\Omega_1(inn(G))$ and let denote $H_i = \Omega_i(H)$. Then
\begin{enumerate}
\item the group $Aut_H(G)$ is nilpotent of class at most $min\{r,s\}+1$.  Where $p^r=exp(G/\gamma_2(G))$ and $ p^s=exp(\zeta(G))$.
\item   $Aut_{H_1}(G)$ is nilpotent of class at most $2$.  
\item If $p>2$, then $Aut_H(G)$ has a regular power structure in the sens $$ \Omega_i(Aut_H(G))= \Omega_{\{i\}}(Aut_H(G))=Aut_{H_i}(G).$$
\end{enumerate}
\end{thm}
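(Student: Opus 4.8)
The plan is to transport the whole problem into the ring $R=\mathrm{Der}(G,H)$ and to read off all three statements from its adjoint group, using Proposition 2.4 to identify $\mathrm{Aut}_H(G)\cong R^{\circ}$. The preliminary point is that $H$ is abelian, so that $\mathrm{Der}(G,H)$ is genuinely a ring and Proposition 2.4 applies; I would deduce this from $H\le\zeta_2(G)$ together with the hypothesis $C_G(\Phi(G))\le\Phi(G)$. The decisive structural fact is then the module structure: since $H\le\zeta_2(G)$ we have $[H,G]\le\zeta_1(G)$, so for every $g\in G$ the operator $g-1$ maps $H$ into the fixed submodule $\zeta_1(G)$ and hence satisfies $(g-1)^2=0$ on $H$. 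This single fact linearises the $p$-th power map, giving on $H$ the operator identities $x^{p}-1=p(x-1)$ and $1+x+\cdots+x^{p-1}=p\bigl(1+\tfrac{p-1}{2}(x-1)\bigr)$ (here $p$ odd is used), so that the derivation rule yields, in additive module notation, $\delta(x^{p})=p\,\delta(x)+\tfrac{p(p-1)}{2}[\delta(x),x]$ with the correction term lying in $\zeta_1(G)$.

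For part (1) I would first show that $R$ is $p$-nil; this is the step where the hypothesis $C_G(\Phi(G))\le\Phi(G)$ does its work, by guaranteeing that a derivation killed by $p$ (equivalently one whose image lies in $\Omega_1(H)$) must annihilate $R$ on one side. Granting this, two independent bounds on the nilpotency degree of $R$ combine to give the claim. On the one hand $H/\zeta_1(G)$ has exponent $p$ and $\exp\zeta_1(G)=p^{s}$, so $\exp(R^{+})\le\exp(H)\le p^{s+1}$; with the $p$-nil property this forces the nilpotency degree of $R$ to be at most $s+1$ by Lemma 2.2. On the other hand, iterating the composition of derivations descends the central filtration $H\ge[H,G]\ge 1$, and because $x^{p^{r}}\in\gamma_2(G)$ for all $x$ (as $\exp(G/\gamma_2(G))=p^{r}$) one checks that a product of $r+2$ derivations vanishes, i.e. the degree is at most $r+1$. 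Lemma 2.1 then bounds the class of $R^{\circ}=\mathrm{Aut}_H(G)$ by $\min\{r,s\}+1$.

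Parts (2) and (3) are harvested from the same ring. For (3), with $p$ odd and $R$ known to be $p$-nil, Lemma 2.3 gives $\Omega_n(R^{\circ})=\Omega_{\{n\}}(R^{\circ})=\Omega_n(R)$, so it remains only to identify $\Omega_n(R)$ group-theoretically. Since $H$ is abelian, $p^{n}\delta=0$ means exactly $\delta(x)^{p^{n}}=1$ for all $x$, i.e. $\delta(G)\subseteq\{h:h^{p^{n}}=1\}=\Omega_n(H)=H_n$; the exact sequence of Lemma 2.5 applied with $B=H_n$ identifies this set of derivations with $\mathrm{Der}(G,H_n)$, and Proposition 2.4 turns it back into $\mathrm{Aut}_{H_n}(G)$, yielding $\Omega_n(\mathrm{Aut}_H(G))=\Omega_{\{n\}}(\mathrm{Aut}_H(G))=\mathrm{Aut}_{H_n}(G)$. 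For (2) I would specialise the degree estimate to the subring $\mathrm{Der}(G,H_1)$: its additive group has exponent $p$, and a direct application of the operator identity above shows that a product of three such derivations already lands in $\zeta_1(G)$ and is then annihilated, so the nilpotency degree is at most $2$ and Lemma 2.1 gives class at most $2$.

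The main obstacle I anticipate is the $p$-nil property of $R=\mathrm{Der}(G,H)$, since both the degree bound via Lemma 2.2 and the whole of part (3) via Lemma 2.3 rest on it; this is precisely where I expect the hypothesis $C_G(\Phi(G))\le\Phi(G)$ to be indispensable, controlling the annihilators of $\Omega_1(R^{+})$, and where $p$ odd enters through the halving in the $p$-th power identity. A secondary technical point is the descent argument giving the $r+1$ bound (and the degree-$2$ bound in (2)), which requires tracking how repeated composition of derivations moves through the submodules $H\ge[H,G]\ge 1$; I would organise this as an induction on the length of the product, using $(g-1)^2=0$ at each step.
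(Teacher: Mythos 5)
There is a genuine gap, and it sits exactly where you anticipated: your arguments for parts (1) and (3) both rest on the claim that $R=\mathrm{Der}(G,H)$ is itself $p$-nil, and you never prove it — you only assert that the hypothesis $C_G(\Phi(G))\le\Phi(G)$ "guarantees" it. This claim is strictly stronger than anything the paper establishes, and there is real reason to doubt it. Since $H$ is abelian, the elements of $R$ killed by $p$ are exactly $D_1=\mathrm{Der}(G,H_1)$, so right $p$-nillity of $R$ means $\delta'\delta=0$ for all $\delta'\in R$, $\delta\in D_1$; with the ring multiplication $\delta'\delta(x)=\delta(\delta'(x))$ this says every $\delta\in D_1$ vanishes on the image of every derivation. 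Now take $\delta'$ to be the inner derivation $x\mapsto[x,h]$ for $h\in H$: a direct computation using $H\le\zeta_2(G)$ and $H$ abelian gives $\delta(\,[x,h]\,)=[x,\delta(h)]$, so $\delta'\delta$ is the inner derivation induced by $\delta(h)$. Hence right $p$-nillity of $R$ already forces $\delta(H)\le\zeta_1(G)$ for every derivation $\delta:G\to H_1$, i.e.\ every automorphism in $\mathrm{Aut}_{H_1}(G)$ must act trivially on $H/Z_1$ — and nothing in the hypotheses delivers this (the left-handed variant is equally unsupported). This is precisely why the paper never claims $D=\mathrm{Der}(G,H)$ is $p$-nil. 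Its Lemma 3.3 proves only the weaker facts $D^2\subset\mathrm{Hom}(G,\zeta(G))$, $D_1^3=0$, and that the \emph{quotient} $D/D_1$ is right $p$-nil of exponent at most $p^{\min\{r,s\}}$. Part (1) then follows by applying Lemma 2.2 to $D/D_1$ (not to $D$), which gives $D^{\min\{r,s\}+1}\subset D_1$, intersecting with $\mathrm{Hom}(G,\zeta(G))$ to land in $\mathrm{Hom}(G,Z_1)$, and using that $\mathrm{Hom}(G,Z_1)$ is a right annihilator of $D$; part (3) is proved by showing each quotient $D/\Omega_n(D^+)$ is right $p$-nil and running an induction with the expansion $\delta^{(p)}=p\delta+\binom{p}{2}\delta^2+\cdots$, rather than invoking Lemma 2.3 for $R$ itself, which is unavailable.

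Two secondary defects: your second degree bound in part (1) — "a product of $r+2$ derivations vanishes" by descending the filtration $H\ge[H,G]\ge1$ — is not substantiated; that filtration has length two and cannot by itself produce a bound depending on $r$. The paper obtains the $r$-dependence from the embedding $D/D_1\hookrightarrow\mathrm{Hom}(G,\zeta(G))\cong\mathrm{Hom}(G/\gamma_2(G),\zeta(G))$ given by $\delta\mapsto p\delta$, which bounds $\exp(D/D_1)$ by $p^{\min\{r,s\}}$. Moreover, even granting $p$-nillity, your exponent estimate $\exp(R^+)\le p^{s+1}$ only yields class at most $s+1$, so both legs of your minimum are needed and both are defective. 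By contrast, part (2) of your proposal is sound and is essentially the paper's argument: $D_1^2\subset\mathrm{Hom}(G,Z_1)$, which right-annihilates $D_1$, whence $D_1^3=0$ and the class bound follows from Lemma 2.1.
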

Before proving this theorem, we need the following lemmas.
\begin{lemma}
 The subgroup $H$ lies in $C_G(\Phi(G))$.  Moreover, under the condition  $C_G(\Phi(G)) \leq \Phi(G)$, the subgroup $H$ is abelian.
\end{lemma}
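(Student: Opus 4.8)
The plan is to exploit that, by the very definition of $H$, every element $x \in H$ lies in $\zeta_2(G)$ and satisfies $x^{p}\in\zeta_1(G)$. Indeed $\zeta_2(G)/\zeta_1(G)=\zeta(G/\zeta_1(G))$ is abelian, so $\Omega_1$ of it consists exactly of the cosets $x\zeta_1(G)$ with $x^{p}\in\zeta_1(G)$. The first aim is to show that such an $x$ centralizes $\Phi(G)=G'G^{p}$. Fixing $x$ and writing $\phi_x(g)=[x,g]$, the condition $x\in\zeta_2(G)$ gives $[x,g]\in\zeta_1(G)$ for all $g$, and the identity $[x,gh]=[x,h][x,g]^{h}$ together with the centrality of $[x,g]$ shows that $\phi_x\colon G\to\zeta_1(G)$ is a group homomorphism into the abelian group $\zeta_1(G)$.

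Two consequences then follow immediately. First, since the target $\zeta_1(G)$ is abelian, the commutator subgroup $G'$ is contained in $\ker\phi_x$, so $x$ centralizes $G'$. Second, from $x^{p}\in\zeta_1(G)$ and the identity $[x^{p},g]=[x,g]^{p}$ (valid because $[x,g]$ is central) we obtain $[x,g]^{p}=1$ for all $g$; combined with $[x,g^{p}]=[x,g]^{p}$ this yields $[x,g^{p}]=1$, so $x$ centralizes every $p$-th power, hence the subgroup $G^{p}$. Since $C_G(x)$ is a subgroup containing both $G'$ and all $g^{p}$, it contains $\Phi(G)=\langle G',\,g^{p}:g\in G\rangle$. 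Thus $x\in C_G(\Phi(G))$, and as $x$ was arbitrary, $H\le C_G(\Phi(G))$, which is the first assertion.

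For the second assertion I would feed the hypothesis $C_G(\Phi(G))\le\Phi(G)$ into what has just been proved. It gives $H\le C_G(\Phi(G))\le\Phi(G)$, so $H$ lies inside $\Phi(G)$ while at the same time centralizing $\Phi(G)$. Hence for any $x,y\in H$ we have $x\in C_G(\Phi(G))$ and $y\in\Phi(G)$, forcing $[x,y]=1$; therefore $H$ is abelian.

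I do not expect a serious obstacle: the whole argument rests on the single observation that $g\mapsto[x,g]$ is a homomorphism for $x\in\zeta_2(G)$, after which abelianness of the target annihilates $G'$ and the relation $x^{p}\in\zeta_1(G)$ annihilates $G^{p}$. The only point demanding care is the bookkeeping of commutator identities, namely verifying that each $[x,g]$ is genuinely central so that $[x,g]^{p}=[x^{p},g]$ and $[x,g^{p}]=[x,g]^{p}$ hold; these are routine once the homomorphism property is established.
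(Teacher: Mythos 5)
Your proof is correct and follows essentially the same route as the paper: both arguments rest on the observation that $g\mapsto[x,g]$ is a homomorphism into $\zeta_1(G)$ whose image has exponent $p$ (since $x^p$ is central), so that $\Phi(G)$ lies in its kernel, and then the hypothesis $C_G(\Phi(G))\le\Phi(G)$ places $H$ inside $Z(\Phi(G))$. The only difference is presentational: the paper concludes $\Phi(G)\le C_G(x)$ from "$G/C_G(x)$ is elementary abelian," whereas you verify the containments $G'\le\ker\phi_x$ and $G^p\le\ker\phi_x$ explicitly, which amounts to the same thing.
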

\begin{proof}
Every element $h \in H$ defines a homomorphism $x \mapsto [x,h]$ from $G$ to its center.  Since $[x,h]^p=[x,h^p]=1$, the image of the previous homomorphism is elementary abelian.  Thus $G/C_G(h)$ is elementary abelian, and so $\Phi(G) \leq   C_G(h)$.  Therefore $h \in C_G(\Phi(G))$.  If $C_G(\Phi(G)) \leq \Phi(G)$, then $H \leq Z( \Phi(G))$.  Hence $H$ is abelian.  
\end{proof} 
In the remaining part of this section $D$ and $D_1$ denote respectively the rings $Der(G,H)$ and  $Der(G,H_1)$.

 \begin{lemma} 
Under the condition $C_G(\Phi(G)) \leq \Phi(G)$, we have
\begin{enumerate}
\item $D^2 \subset Hom(G,\zeta(G))$.
\item ${D_1}^3=0$. 
\item The factor ring $D/D_1$ is right $p$-nil, and has exponent at most $p^{min\{r,s\}}$.  Where $p^r=exp(G/\gamma_2(G))$ and $ p^s=exp(\zeta(G))$. 
\end{enumerate}
\end{lemma}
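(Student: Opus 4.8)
The plan rests on two elementary maps attached to each $\delta\in D$. By Lemma 3.2, $H$ is abelian with $H\le\zeta_2(G)\cap\Phi(G)$; hence for every $\delta\in D$ and all $x,g\in G$ we have $\delta(x)^p\in\zeta(G)$ (since $H/\zeta(G)$ has exponent $p$) and $[\delta(x),g]\in\zeta(G)$ with $[\delta(x),g]^p=[\delta(x)^p,g]=1$. The first map is the endomorphism $\phi_\delta:G\to G$, $\phi_\delta(x)=x\delta(x)$, which is a homomorphism because $\phi_\delta(x)\phi_\delta(y)=xy\,\delta(x)^y\delta(y)=xy\,\delta(xy)$. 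The second, and the real workhorse, is $\psi_\delta:G\to\zeta(G)$, $\psi_\delta(x)=\delta(x)^p$: expanding $\delta(xy)=\delta(x)^y\delta(y)$ and raising to the $p$-th power inside the abelian group $H$, the factor $[\delta(x),g]^p$ vanishes and $(\delta(x)^p)^y=\delta(x)^p$, so $\psi_\delta$ is a homomorphism into the centre. Being valued in an abelian group it kills $\gamma_2(G)$, hence factors through $G/\gamma_2(G)$ and has image of exponent at most $p^{r}$.

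For (1) I would show $\delta(\Phi(G))\subseteq\zeta(G)$ for every $\delta\in D$. The set $K=\{g\in G:\delta(g)\in\zeta(G)\}$ is a subgroup, since $\delta(g_1)\in\zeta(G)$ forces $\delta(g_1g_2)=\delta(g_1)^{g_2}\delta(g_2)=\delta(g_1)\delta(g_2)$. It contains every $p$-th power, because $\delta(x^p)=\delta(x)^p\prod_{i=0}^{p-1}[\delta(x),x^i]$ lies in $\zeta(G)$; and it contains $\gamma_2(G)$, because applying $\phi_\delta$ to $[a,b]$ and passing to $G/\zeta(G)$, where $\delta(a)$ and $\delta(b)$ become central as $H\le\zeta_2(G)$, gives $\overline{\delta([a,b])}=1$. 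Thus $\Phi(G)\le K$, and since $H\le\Phi(G)$ we get $\delta(H)\subseteq\zeta(G)$. For a product the image is $\delta_2(\mathrm{im}\,\delta_1)\subseteq\delta_2(H)\subseteq\zeta(G)$, and a derivation with central image is a homomorphism; hence $D^2\subseteq Hom(G,\zeta(G))$.

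Part (2) is then formal. One first checks $\delta(H_1)\subseteq\Omega_1(\zeta(G))\le H_1$ for $\delta\in D$ (the restriction $\delta|_{H_1}$ is a homomorphism into $\zeta(G)$ and $H_1$ has exponent $p$); this is exactly the hypothesis of Lemma 2.6, so $D_1$ embeds in $D$ as an ideal. By (1) any product from $D_1$ has image in $\delta_2(H_1)\subseteq H_1\cap\zeta(G)=\Omega_1(\zeta(G))$, so $D_1^2\subseteq Hom(G,\Omega_1(\zeta(G)))$. A homomorphism into the exponent-$p$ group $\Omega_1(\zeta(G))$ kills $G^p$ and $\gamma_2(G)$, hence kills $\Phi(G)\supseteq H_1$. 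Since every $\delta\in D_1$ has image inside $H_1\le\Phi(G)$, composing with any $\eta\in D_1^2$ gives $(\delta\eta)(x)=\eta(\delta(x))=1$; therefore $D_1^3=D_1\cdot D_1^2=0$.

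For (3), with $D_1$ an ideal the quotient $D/D_1$ is a ring. The exponent bound is where $\psi_\delta$ pays off: since $\mathrm{im}\,\psi_\delta$ has exponent at most $p^{r}$ we get $\delta(x)^{p^{r+1}}=\psi_\delta(x)^{p^{r}}=1$, so $\delta(x)^{p^{r}}\in\Omega_1(H)=H_1$ and $p^{r}\delta\in D_1$; independently $\delta(x)^p\in\zeta(G)$ has order dividing $p^{s}$, whence $\delta(x)^{p^{s+1}}=1$ and $p^{s}\delta\in D_1$. Thus $D/D_1$ has exponent at most $p^{\min\{r,s\}}$. For right $p$-nilness I take $\delta$ representing a class with $p\bar\delta=0$, that is $\delta(G)^p\subseteq H_1$, and must prove $\eta\delta\in D_1$ for every $\eta\in D$, i.e. $\delta(H)\subseteq H_1$. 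Now $\mathrm{im}\,\psi_\delta\subseteq H_1\cap\zeta(G)=\Omega_1(\zeta(G))$ has exponent $p$, so $\psi_\delta$ annihilates $\Phi(G)\supseteq H$; consequently $\delta(h)^p=\psi_\delta(h)=1$ for every $h\in H$, giving $\delta(H)\subseteq\Omega_1(H)=H_1$, and since $\eta(x)\in H$ this forces $(\eta\delta)(x)=\delta(\eta(x))\in H_1$. I expect the main obstacle to be precisely the recognition that $x\mapsto\delta(x)^p$ is a homomorphism into the centre factoring through $G/\gamma_2(G)$: this single observation simultaneously yields the sharp $p^{r}$ part of the exponent bound and the right $p$-nil property, while the subsidiary point needing care is the verification $\delta(H_1)\subseteq H_1$ that legitimises the quotient ring.
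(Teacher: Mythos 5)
Your proposal is correct, and at bottom it runs on the same two facts as the paper's proof: that $H\le\Phi(G)$ and is abelian under the hypothesis (Lemma 3.2), and that any homomorphism into an elementary abelian central section kills $\Phi(G)$, which is what forces products of derivations to collapse into $Hom(G,\zeta(G))$, resp.\ $Hom(G,Z_1)$. The difference is one of packaging. The paper routes parts (1) and (2) through the exact sequence of Lemma 2.5: the induced map $D\to Der(G/\zeta(G),H/\zeta(G))=Hom(G/\zeta(G),H/\zeta(G))$ lands in a null ring, so $D^2$ lies in the kernel $Hom(G,\zeta(G))$, and similarly for $D_1$ with $Z_1$. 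You instead inline everything: your map $\psi_\delta(x)=\delta(x)^p$ is literally the paper's element $p\delta$ viewed as a homomorphism into the centre, and your right-$p$-nil and exponent arguments in (3) are the paper's arguments ($p\delta\in Hom(G,Z_1)$ annihilates $D$ on the right; $p^{\min\{r,s\}}\delta\in D_1$) written out pointwise. One place where your route is genuinely more laborious than necessary is part (1): your subgroup $K=\{g:\delta(g)\in\zeta(G)\}$, with separate verifications for $p$-th powers (via the expansion of $\delta(x^p)$) and for commutators (via the endomorphism $\phi_\delta$), re-derives by hand what the paper gets in one line from the observation that $\delta$ induces a \emph{homomorphism} $G/\zeta(G)\to H/\zeta(G)$ (because $H/\zeta(G)$ is central there) into an elementary abelian group, hence kills $\Phi(G)/\zeta(G)$. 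On the other hand, your explicit check that $\delta(H_1)\subseteq Z_1\le H_1$, so that $D_1$ really is an ideal and $D/D_1$ a ring, makes precise a point the paper leaves implicit; that is a small gain. One cosmetic slip: the embedding lemma you invoke is the paper's Lemma 2.5, not Lemma 2.6.
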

\begin{proof}
\begin{enumerate}
\item  Let take $A=H$ and $B=\zeta(G)$, in lemma 2.5.  Since $H/\zeta(G)$ lies in the center of $G/\zeta(G)$, we have $Der(G/\zeta(G),H/\zeta(G)) = Hom(G/\zeta(G),H/\zeta(G)) $.  And since $H/\zeta(G)$ is elementary abelian, $H/\zeta(G) \leq \Phi(G)/\zeta(G)$ is contained in the kernel of every homomorphism in $ Hom(G/\zeta(G),H/\zeta(G)) $.  This imply that $ Hom(G/\zeta(G),H/\zeta(G)) $ is a null ring.  By lemma 2.5, the ring $D/Hom(G,\zeta(G))$ can be embedded in $ Hom(G/\zeta(G),H/\zeta(G)) $.  And so  $D^2 \subset Hom(G,\zeta(G))$.   
\item  Let take $A=H_1$ and $B=\Omega_1(\zeta(G))=Z_1$.  We have $H_1/Z_1$ is central in $G/Z_1$ and has exponent $p$.  A similar argument to that in (1) imply that ${D_1}^2 \subset Hom(G,Z_1)$.  Since every homomorphism in $Hom(G,Z_1)$ contains $\Phi(G)$ in its kernel, $Hom(G,Z_1)$ is a right annihilator of $D_1$.  Thus ${D_1}^3=0$.   
\item Let $\delta +D_1 \in \Omega_1(D/D_1)$.  Then $p^2 \delta=0$.  And since $p\delta(x)=\delta(x)^p \in \zeta(G) \mbox{, for all } x \in G$, we have $p\delta \in Hom(G,Z_1)$.  Thus $p \delta'\delta=0 \mbox{ for all } \delta' \in D$, since $Hom(G,Z_1)$ is a right annihilator of the ring $D$. This imply that $ \delta'\delta \in D_1$, and so the ring $D/D_1$ is right $p$-nil.  On the other hand, the homomorphism of group $ D \rightarrow Hom(G,\zeta(G))$ which maps $\delta$ to $p\delta$, induces a monomorphism  $ D/D_1 \rightarrow Hom(G,\zeta(G))$.  Thus $exp(D/D_1) \leq exp(Hom(G,\zeta(G)))= p^{ min\{r,s\}}$.  The result follows.  
\end{enumerate}
\end{proof} 
\begin{proof}[Proof of theorem 3.1]
\begin{enumerate}
\item  Since  $D/D_1$ is a right $p$-nil ring, by lemma 2.2, $D^{min\{r,s\}+1} \subset D_1$.  And by lemma 3.3, 1., we have $D^{min\{r,s\}+1} \subset Hom(G,\zeta(G))$.  It follows that $D^{min\{r,s\}+1} \subset Hom(G,Z_1)$.  Since $Hom(G,Z_1)$ is a right annihilator of the ring $D$, we have $D^{min\{r,s\}+2}=0$.  Hence $D$ has nilpotency degree at most $min\{r,s\}+1$, and so   $Aut_H(G) \cong D^{\circ}$ has nilpotency class at most $min\{r,s\}+1$.     
\item  Since $Aut_{H_1}(G) \cong {D_1}^{\circ}$, it follows immediately from lemma 3.3, 2. that $Aut_{H_1}(G)$ has nilpotency class at most $2$. 
\item   We begin by showing that $D/\Omega_n(D^+)$ is a right p-nil ring, for $n \geq 1$.  If $\delta +\Omega_n(D^+) \in \Omega_1(D/\Omega_n(D^+))$, then $p\delta \in \Omega_n(D^+)$.  Hence   $p^n\delta \in D_1 \cap  Hom(G,\zeta(G))$, and so  $p^n\delta  \in  Hom(G,Z_1)$.  It follows  that $p^n \delta'\delta=0 \mbox{ ,for all } \delta' \in D$.  Thus $\delta'\delta \in  \Omega_n(D^+)$.  
\\ Now, we claim that $\Omega_{\{n\}}(D^\circ)=\Omega_n(D^+)$,  for every $n \geq 1$.  For $n=1$ we have $\delta \in  \Omega_1(D^+)$, or equivalently  $\delta \in D_1$, imply that  $\delta^3=0$.  Thus $$ \delta^{(p)}= \sum_{i\geq 1} \binom{p}{i}\delta^i=p\delta+\binom{p}{2}\delta^2 =0$$
hence  $\delta \in \Omega_{\{1\}}(D^\circ)$.  Conversely, if  $\delta^{(p)}= 0$, then in particular $\delta^{(p)}= 0 \mbox{ mod }D_1$.  Since $D/D_1$ is right $p$-nil, by lemma 2.3 we have  $p\delta =0 \mbox{ mod }D_1$.  Hence $\delta^2 \in D_1$, and by lemma 3.3, 1. $\delta^2 \in Hom(G,\zeta(G))$.  It follows that  $\delta^2 \in Hom(G,Z_1)$, which imply $\delta^3=0$.  Thus $ 0=\delta^{(p)}=p\delta+\binom{p}{2}\delta^2 =p\delta$, and so $\delta \in  \Omega_1(D^+)$.
\\Now we proceed by induction on $n$.  We have $\delta \in  \Omega_n(D^+)$ imply  $\delta +\Omega_{n-1}(D^+) \in \Omega_1(D/\Omega_{n-1}(D^+))$.  Since $D/\Omega_{n-1}(D^+)$ is a right p-nil ring, $(\delta +\Omega_{n-1}(D^+))^{(p)}=  \Omega_{n-1}(D^+)$.  Hence $ \delta^{(p)} \in \Omega_{n-1}(D^+)$, and by induction $ \delta^{(p)} \in \Omega_{\{n-1\}}(D^\circ)$.  Thus $ \delta^{(p^n)}=0$.  This shows that  $\Omega_n(D^+) \subset \Omega_{\{n\}}(D^\circ)$.  The inverse inclusion follows similarly.
\\Finally, since $\Omega_n(D^+)$ is an ideal in the ring $D$, it follows that $\Omega_{\{n\}}(D^\circ)$ is a subgroup of $D^\circ$.  Thus $\Omega_n(D^\circ)=\Omega_{\{n\}}(D^\circ)$.  And since $ \Omega_n(D^+)= Der(G,H_n)^{\circ} =Aut_{H_n}(G)$, the result follows. 
\end{enumerate}

\end{proof}

\section{\bf {\bf \em{\bf $p$-groups with an exact sequence}}}

Throughout this section, unless otherwise stated, $G$ denotes a finite $p$-group, where $p$ is an odd prime.  And $A$ denotes a normal subgroup of $G$, which is elementary abelian of rank $2$, and not central.
\\we wish to caracterize the class of the groups $G$ such that the following sequence is exact.
 $$ 0  \rightarrow Der(G,Z_1)  \rightarrow Der(G,Z_1) \rightarrow Der(G/Z_1,A/Z_1)\rightarrow 0$$ 
Where $Z_1$ denotes the intersection of $A$ with the center.
\\Notice that the above sequence can be written as 
 $$ 0  \rightarrow Hom(G,Z_1)  \rightarrow Der(G,Z_1) \rightarrow Hom(G/Z_1,A/Z_1)\rightarrow 0$$
and by lemma 2.5 we have only to focus on its exacteness in $ Hom(G/Z_1,A/Z_1)$
\\To work in a general framework, it's convenient to consider the following notion.
\begin{defn}
Let $C$ be a maximal subgroup of $G$.  We say that $G$ is full with respect to $C$ if every maximal subgroup $M \neq C$ contains a subgroup $K$ such that : 
\begin{enumerate}
     \item  K is not normal and has index $p^2$ in $G$
     \item   $K \cap  C$ is a normal subgroup of $G$ which contains the subgroup $G^p$. 
\end{enumerate}

\end{defn}
Let us note some easy facts related to the above definition.

\begin{enumerate}
     \item  Clearly, if $G$ is full with respect to one of its maximal subgroups, then $G$ can not be powerful.
     \item   In order for $G$ to be full with respect to a maximal subgroup $C$ it's necessary and sufficient that $G/\gamma_3(G)G^p$ is full with respect to $C/\gamma_3(G)G^p$.
     \item  If $G$ is full with respect to $C$, then $G$ is full with respect to $C^{\sigma}$ for every $\sigma \in Aut(G)$.
\end{enumerate}

 The class of p-groups defined above is very large as shows the following proposition.
 \begin{prop}
If $G$ is $2$-generated and not powerful, then $G$ is full with respect to all its maximal subgroups.
\end{prop}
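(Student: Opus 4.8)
The plan is to reduce everything to the single small group $\bar{G} = G/\gamma_3(G)G^p$ by means of the reduction recorded after Definition~4.1 (that $G$ is full with respect to $C$ if and only if $\bar G$ is full with respect to $C/\gamma_3(G)G^p$); the maximal subgroups of $G$ and of $\bar G$ are in obvious bijection, since both correspond to the index-$p$ subspaces of $G/\Phi(G) = \bar G/\Phi(\bar G)$. First I would pin down the isomorphism type of $\bar G$. By construction $\bar G^p = 1$ and $\gamma_3(\bar G) = 1$, so $\bar G$ has exponent $p$ and class at most $2$; being $2$-generated, its commutator subgroup $\gamma_2(\bar G) = \Phi(\bar G)$ is generated by a single commutator and is therefore cyclic. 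The hypothesis that $G$ is not powerful translates, for $p$ odd, into $\gamma_2(G) \not\le \gamma_3(G)G^p$ (a nilpotent-collapse argument shows $\gamma_2(G)\le \gamma_3(G)G^p$ forces $\gamma_2(G)\le G^p$), i.e. $\bar G$ is non-abelian. Hence $\gamma_2(\bar G)$ has order exactly $p$, and $\bar G$ is the non-abelian group of order $p^3$ and exponent $p$.

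The second step records the elementary structure of this $\bar G$, which I would use directly. Its centre equals its Frattini subgroup and its derived subgroup, all equal to $\langle c\rangle := \gamma_2(\bar G)$ of order $p$. Every maximal subgroup has order $p^2$, contains $\langle c\rangle$, and---because $\bar G$ has exponent $p$---is elementary abelian, hence possesses exactly $p+1$ subgroups of order $p$. Two distinct maximal subgroups $M$ and $C$ satisfy $M \cap C = \langle c\rangle$, since $|M \cap C| = p$ while $\langle c\rangle \le M \cap C$. Finally, the only normal subgroup of order $p$ in $\bar G$ is the central $\langle c\rangle$, as any normal subgroup of order $p$ in a $p$-group is central.

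With these facts the construction of $K$ is immediate, and this is exactly where the two defining conditions of fullness become trivial. Given a maximal subgroup $C$ and any maximal subgroup $M \ne C$, I would take $K$ to be any of the $p$ subgroups of order $p$ of $M$ different from $\langle c\rangle$ (such exist since $M$ is elementary abelian of rank $2$). Then $K$ has index $p^2$ and is not normal, giving condition (1); and $K \cap C \le M \cap C = \langle c\rangle$, so $K \cap C = 1$, which is normal and contains $\bar G^p = 1$, giving condition (2). Thus $\bar G$ is full with respect to every maximal subgroup, and by the reduction so is $G$. I expect the only genuinely delicate point to be the bookkeeping of the first paragraph: verifying that \emph{not powerful} is detected in $\bar G$ as non-abelianness, and that exponent $p$---which forces the maximal subgroups to be elementary abelian rather than cyclic---is what makes the required non-central $K$ available.
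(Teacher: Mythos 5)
Your proof is correct and follows essentially the same route as the paper's: reduce modulo $N=\gamma_3(G)G^p$, use non-powerfulness (via the same nilpotent-collapse argument) to see that $G/N$ is non-abelian of class $2$, and take for $K$ (the preimage of) a subgroup of order $p$ of $M/N$ other than $\Phi(G)/N$. The only cosmetic differences are that you pin down $G/N$ explicitly as the extraspecial group of order $p^3$ and exponent $p$ and invoke the paper's unproved Remark (2) to pull the conclusion back to $G$, whereas the paper performs that pullback inline by working directly with the preimage $K\supseteq N\supseteq G^p$ and checking non-normality and normality of $K\cap C$ in $G$ itself.
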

\begin{proof}
Let  $C$ be a maximal subgroup of $G$. 
\\Let $N=\gamma_3(G)G^p$, and let denote by $G_1$ the section  $G/N$.  It follows immediately that $G_1$ has exponent $p$ and class at most $2$,  and so $\Phi (G_1) \leq Z(G_1)$, that is  $\Phi (G)/N \leq Z(G/N)$.\\
  If $G_1$ is abelian, then $\gamma_2(G) \leq \gamma_3(G) G^p$.  This imply that $\gamma_2(G) \leq  G^p$, which means that $G$ is powerful, a contradiction.  Thus $G$ has class exactely $2$.
\\Now let $M \neq C$  be a maximal subgroup of $G$.  If $M_1=M/N$ is cyclic, then $|M_1| = p$, since $G_1$ has exponent $p$.  This imlpy that $|G_1|=p^2$, thus $G_1$ is abelian, a contradiction.  Therefore $M_1$ is not cyclic, and so it contains at least $p+1$ maximal subgroup.  Let $K/N$ be maximal subgroup in $M/N$ which is distinct from  $\Phi (G)/N$.
\begin{description}
     \item[i)] It follows immediately that $K \leq M$ has index $p^2$.  And if $K$ is normal in $G$, then  $\gamma_2(G) \leq K$.  And since $G^p \leq K$ we obtain  $\Phi (G) = K$, a contradiction.  Thus $K$ is not normal in $G$.  
     \item[ii)]  We have ${K \cap  C} \leq {M \cap C} = \Phi (G)$, hence ${(K \cap  C) /N} \leq  {\Phi (G)/N} \leq Z(G/N)$.  Thus $K \cap  C$ is normal in $G$.  
\end{description}
The result follows. 
  \end{proof}
Now we state the main theorem of this section.

\begin{thm}
Assume that $G$ is purely non-abelian, $C$ is the centralizer of $A$ in $G$, and $Z_1=A \cap Z(G)$.  Then the sequence
 $$ 0  \rightarrow Hom(G,Z_1)  \rightarrow Der(G,Z_1) \rightarrow Hom(G/Z_1,A/Z_1)\rightarrow 0$$
is exact if and only if $G$ is full with respect to $C$. 

\end{thm}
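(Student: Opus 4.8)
The plan is to strip the statement down to a single surjectivity question and then to read off fullness from an explicit analysis of that surjection. First I would record the structure forced by the hypotheses. Since $A \trianglelefteq G$ is elementary abelian of rank $2$ and non-central, $G/C$ embeds in $GL_2(p)$ and is a $p$-group, so $C=C_G(A)$ is a normal maximal subgroup and a generator of $G/C$ acts on $A$ as a non-trivial unipotent transformation fixing $Z_1=A\cap Z(G)$; thus $|Z_1|=p$, the quotient $A/Z_1$ is a trivial $G$-module, and choosing $a\in A\setminus Z_1$ and a generator $t$ of $Z_1$ I may write $a^x=a\,t^{\chi(x)}$, where $\chi\colon G\to\mathbb{Z}_p$ is the homomorphism with kernel $C$ (Lemma 2.6). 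Because $Z_1\le \Omega_1(Z(G))\le\Phi(G)$ by Lemma 2.8 (as $G$ is purely non-abelian), every homomorphism $G\to A/Z_1$ kills $Z_1$, so $\mathrm{Hom}(G/Z_1,A/Z_1)=\mathrm{Hom}(G,A/Z_1)$; likewise centrality of $Z_1$ gives $\mathrm{Der}(G,Z_1)=\mathrm{Hom}(G,Z_1)$ and the trivial action gives $\mathrm{Der}(G/Z_1,A/Z_1)=\mathrm{Hom}(G/Z_1,A/Z_1)$. A one-line check that $Z_1^{\delta}\subseteq Z_1$ for every $\delta$ lets me apply Lemma 2.5, so the sequence is automatically exact except possibly at the right-hand term. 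Hence the theorem reduces to: the restriction map $\theta\colon \mathrm{Der}(G,A)\to\mathrm{Hom}(G,A/Z_1)$, $\delta\mapsto(x\mapsto\delta(x)Z_1)$, is onto iff $G$ is full with respect to $C$.

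Since the image of $\theta$ is a subspace and the homomorphisms with kernel a maximal subgroup span $\mathrm{Hom}(G,A/Z_1)$, it suffices to decide liftability maximal subgroup by maximal subgroup. Fixing a maximal $M$ with defining homomorphism $r$ (Lemma 2.6) and $\bar\phi(x)=a^{r(x)}Z_1$, a lift is exactly a map $\delta(x)=a^{r(x)}t^{\beta(x)}$, and the derivation identity $\delta(xy)=\delta(x)^y\delta(y)$ collapses, after using $a^y=a\,t^{\chi(y)}$, to the single scalar condition
\[
\beta(xy)-\beta(x)-\beta(y)=r(x)\,\chi(y).
\]
So $\bar\phi$ lifts iff the cocycle $(x,y)\mapsto r(x)\chi(y)$ is a coboundary. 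For $M=C$ (so $r=\chi$) this holds because the only obstruction sits in $\delta(y^p)=t^{\,r(y)\chi(y)\binom{p}{2}}$, which vanishes as $p$ is odd; this is where oddness of $p$ is essential. For $M\neq C$ two identities drive everything: the power computation, giving $\delta(y^p)=1$ and hence $G^p\subseteq\ker\delta$ for any such lift, and the commutator identity
\[
\beta([u,v])=r(u)\chi(v)-r(v)\chi(u),
\]
obtained by expanding $\beta(uv)$ and $\beta(vu)$ and using $[u,v]\in\Phi(G)\le\ker r\cap\ker\chi$.

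For the forward direction, suppose $\theta$ is onto and fix maximal $M\neq C$ with lift $\delta$. I would set $K=\ker\delta=\{x\in M:\beta(x)=0\}$, a subgroup since $\delta$ is a derivation. On $M\cap C$ one has $r=\chi=0$, so $\beta$ restricts to a homomorphism there, while the commutator identity gives $\beta([x,g])=-r(g)\chi(x)$ for $x\in M$. This vanishes on $M\cap C$, so $K\cap C=\ker(\beta|_{M\cap C})$ is normal in $G$ and contains $G^p$; but it is non-zero for some $x\in M\setminus C$ and $g\notin M$ (such exist as $M\neq C$), so $\beta|_M$ is not $G$-invariant, forcing $\beta|_M\neq0$ and $K$ non-normal of index $p^2$. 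These are conditions (1)–(2) of fullness, and $M$ was arbitrary. Conversely, given such $K\le M$ I would construct $\beta$: condition (2) makes $N=K\cap C$ normal with $G^p\le N\le M\cap C$, through which $r$ and $\chi$ both factor, and condition (1) forces $G/N$ to have order $p^3$ and exponent $p$ with a non-normal subgroup, i.e. to be a Heisenberg group; on such a quotient the cocycle $r\cdot\chi$ is a coboundary (its commutator relation splits the associated central extension), so the required $\beta$ exists on $G/N$ and inflates to $G$. With the case $M=C$ and linearity, $\theta$ is then onto.

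The routine part is the first paragraph. The real content, and the step I expect to be most delicate, is the dictionary of the third paragraph: that the single coboundary condition is equivalent to the existence of $K$ with all of properties (1)–(2). The clean device is $K=\ker\delta$ together with the two displayed identities, which simultaneously yield $G^p\le K$, normality of $K\cap C$, and non-normality of $K$; the converse hinges on recognizing $G/(K\cap C)$ as a Heisenberg group and splitting the extension there. Care is needed to rule out degenerate configurations (for instance $K\le C$, excluded because $M\cap C=\ker(r,\chi)$ is normal whereas $K$ is not), and the oddness of $p$ must enter exactly at the power computation, since for $p=2$ the factor $\binom{p}{2}$ is odd and the argument fails.
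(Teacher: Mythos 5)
Your proposal is correct, and its two halves relate to the paper differently. The ``exact $\Rightarrow$ full'' direction is essentially the paper's own argument: the paper also takes $K=\ker\delta$ for a lift $\delta$, shows $K\le M$ has index $p^2$, rules out normality (a normal subgroup of index $p^2$ would contain $\gamma_2(G)$, contradicting the nonvanishing commutator value), proves $K\cap C\lhd G$, and gets $G^p\le K$ from the $\binom{p}{2}$ power identity; you reach the same conclusions, just organized through the two scalar identities of your cocycle reformulation $\delta=a^{r}t^{\beta}$, $\beta(xy)-\beta(x)-\beta(y)=r(x)\chi(y)$. The ``full $\Rightarrow$ exact'' direction, however, is a genuinely different route. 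The paper builds the lifting derivation explicitly (Lemmas 4.4--4.6): it constructs $\alpha:K\to\mathbb{Z}_p$ from $[k,y]=x^{\alpha(k)}\bmod K$ and verifies by a lengthy commutator computation that $\delta(kx^jy^i)=z^ju^i$ is a derivation. You instead pass to $N=K\cap C$, observe that conditions (1)--(2) of Definition 4.1 force $G/N$ to be the extraspecial group of order $p^3$ and exponent $p$ (here $p$ odd is essential), and split the inflated cocycle $(g,h)\mapsto r(g)\chi(h)$ there. That claim is true but, as you flag, your parenthetical justification is only a gesture; it should be written out, e.g.\ take $X\in (M\setminus C)/N$, $Y\in (C\setminus M)/N$, $Z=[X,Y]$, normalize $\chi(X)=r(Y)=1$, write elements as $X^aY^bZ^c$, and check that $\beta(X^aY^bZ^c)=-c$ satisfies $\beta(gh)-\beta(g)-\beta(h)=r(g)\chi(h)$ (note the order of the basis matters: with the roles of $X$ and $Y$ exchanged one gets the transposed cocycle $r(h)\chi(g)$ and must add the correction term $ab$ to $\beta$). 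Your $\beta$ is, in effect, the paper's $z^ju^i$ written additively, but the cohomological packaging buys a much shorter argument, makes transparent exactly why fullness is the right hypothesis (the obstruction class must die on an exponent-$p$ Heisenberg quotient), and localizes the use of $p>2$; the paper's approach buys a self-contained, fully explicit derivation at the cost of the heavy computation in Lemma 4.6. One trivial slip: the reference for $Z_1\le\Phi(G)$ when $G$ is purely non-abelian is Lemma 2.7, not 2.8.
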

We need the following three lemmas before embarking on the proof.  Throughout these,  $G$ is assumed to be full with respect to $C=C_G(A)$.  And $K$ will denote a subgroup of a fixed maximal subgroup $M \neq C$, which satisfy the conditions 1. and 2. of  definition 4.1.
\begin{lemma}
Let $y \in C- M$  and $x \in M- K$.  Then the map $k \mapsto \alpha (k)$ defined from $K$ into $\mathbb{Z}_p$ by the relation 
$$ [k,y]=x^{\alpha (k)}  \mbox { mod } K$$
is a group homomorphism, whose kernel is equal to $K \cap C$.
\end{lemma}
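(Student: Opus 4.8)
The plan is to establish the three claims — well-definedness, additivity, and the identification of the kernel — in that order, using only the index relations forced by fullness together with standard commutator calculus. First I would pin down the indices. Since $C$ and $M$ are maximal they have index $p$, and as $K$ has index $p^2$ in $G$ and lies in $M$ it has index $p$ in $M$; thus $M/K \cong \mathbb{Z}_p$ and, because $x \in M-K$, this quotient is generated by $xK$. Moreover $K \not\le C$ (otherwise $K = K\cap C$ would be normal, against condition 1 of Definition 4.1), so $KC = G$, giving $[K:K\cap C] = [G:C] = p$ and hence $[G:K\cap C] = p^3$. With these in hand well-definedness is clear: for $k \in K$ one has $[k,y] \in \gamma_2(G) \le \Phi(G) \le M$, so $[k,y]K$ is a genuine element of $M/K$ and equals a unique power $x^{\alpha(k)}$ modulo $K$.

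For additivity I would expand $[k_1 k_2, y] = [k_1,y]^{k_2}[k_2,y]$ and argue that the conjugation disappears modulo $K$, i.e. $[k_1,y]^{k_2} = [k_1,y]\,[[k_1,y],k_2] \equiv [k_1,y] \pmod{K}$. This rests on the inclusion $\gamma_3(G) \le K$. The structural input is that $K\cap C$ is normal of index $p^3$ and contains $G^p$, so $G/(K\cap C)$ has order $p^3$ and exponent $p$; every group of order $p^3$ has class at most $2$, whence $\gamma_3(G) \le K\cap C \le K$. Since $M/K$ is abelian, reducing $[k_1 k_2, y] \equiv [k_1,y][k_2,y]$ modulo $K$ then gives $\alpha(k_1 k_2) = \alpha(k_1) + \alpha(k_2)$.

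Finally I would identify the kernel, which I view as the heart of the matter. One inclusion is immediate from normality: for $k \in K\cap C$ we have $[k,y] = k^{-1}k^{y} \in K\cap C \le K$, so $\alpha(k) = 0$ and $K\cap C \subseteq \ker\alpha$. Since $[K:K\cap C] = p$, the subgroup $\ker\alpha$ equals either $K\cap C$ or $K$, so everything comes down to excluding $\ker\alpha = K$. That equality says $[K,y] \le K$, i.e. $y$ normalizes $K$; but $K$ is maximal in $M$, hence $K \trianglelefteq M$, and together with $y \in C-M$ (so $\langle M, y\rangle = G$) this would force $K \trianglelefteq G$, contradicting that $K$ is not normal. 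Therefore $\ker\alpha = K\cap C$.

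The main obstacle is exactly this last exclusion: showing $\alpha \ne 0$. It is here that condition 1 of fullness — the non-normality of $K$ — is indispensable, while the remaining steps are routine once the index count $[G:K\cap C] = p^3$ makes the class-$2$ argument for $\gamma_3(G) \le K$ available.
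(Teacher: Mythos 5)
Your proof is correct and follows essentially the same route as the paper's: the same index count giving $[G:K\cap C]=p^3$ and hence $\gamma_3(G)\le K\cap C$, the same commutator expansion $[k_1k_2,y]=[k_1,y]^{k_2}[k_2,y]$ reduced modulo $K$ for additivity, and the same kernel identification (normality of $K\cap C$ for one inclusion, non-normality of $K$ together with $K\trianglelefteq M$ and $\langle M,y\rangle=G$ to rule out $\alpha=0$). The only difference is that you make explicit details the paper leaves implicit, such as well-definedness and the class-at-most-$2$ justification for $\gamma_3(G)\le K\cap C$.
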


\begin{proof}
Clearly, $K \neq K \cap C$ and $K/ K \cap C \cong KC/C$ has order $p$.  It follows that $K \cap C$ has index $p^3$ in $G$, and so  $\gamma_3(G) \leq K \cap C$.
\\ Let $k, k' \in K$.  Then
$$ [kk',y] = [k,y] [k,y,k'][k',y]$$ 
hence
$$ [kk',y] = [k,y] [k',y] \mbox { mod } K$$
that is $$x^{\alpha (kk')} = x^{\alpha (k)}x^{\alpha (k')} \mbox { mod } K.$$
This shows that $\alpha$ is a homomorphism.
\\If $\alpha$ is null, then $<y> \leq N_G(K)$.  And since $K$ is normalized by $M$, this imply that $K \lhd G$, a contradiction.  We claim that $K \cap C \leq ker(\alpha)$, which imply that  $K \cap C = ker(\alpha)$.
This follows immediately from the fact $K \cap C$ is normal in $G$. 
  
\end{proof}
\begin{lemma}
For every $u \in A- Z_1$, there exists $z \in Z_1$ such that $ [k,u]=z^{\alpha (k)}  \mbox { for all }k \in K$.  Where $\alpha : K \rightarrow \mathbb{Z}_p $ is the homomorphism defined in lemma 3.4, and $Z_1=A \cap Z(G)$. 

\end{lemma}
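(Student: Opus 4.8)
The plan is to exploit the fact that the conjugation action of $G$ on $A$ factors through the cyclic quotient $G/C \cong \mathbb{Z}_p$, so that $[k,u]$ depends on $k$ only through its image in $G/C$, which is exactly the datum recorded by $\alpha$. First I would observe that $A \leq C$ (as $A$ is abelian and $C = C_G(A)$) and that, since $\ker\alpha = K \cap C$ has index $p$ in $K$, the map $\alpha$ is onto $\mathbb{Z}_p$. Fixing $k_0 \in K$ with $\alpha(k_0)=1$, I can write each $k \in K$ as $k = k_0^{\alpha(k)}c$ with $c \in K\cap C$. Because $c$ centralizes $A$ and the commutator $[k_0^{\alpha(k)},u]$ already lies in $A$ (as $A \lhd G$), the identity $[xy,u]=[x,u]^y[y,u]$ gives the collapse $[k,u]=[k_0^{\alpha(k)},u]^{c}[c,u]=[k_0^{\alpha(k)},u]$.

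The heart of the argument is then the action $\theta$ of $k_0$ (equivalently of a generator of $G/C$) on $A\cong \mathbb{F}_p^2$. Since $A$ is not central, $\theta\neq 1$, and since $k_0^{p}\in G^{p}\leq C$ the order of $\theta$ divides $p$; hence its minimal polynomial divides $(X-1)^{p}$, so $\theta-1$ is a nonzero nilpotent endomorphism of a two-dimensional space. This forces $(\theta-1)^2=0$ and $\operatorname{rank}(\theta-1)=1$, whence $\operatorname{im}(\theta-1)=\ker(\theta-1)$, both equal to the fixed space $C_A(\theta)=A\cap Z(G)=Z_1$. In particular $[A,G]=\operatorname{im}(\theta-1)=Z_1\leq Z(G)$, so the element $z:=[k_0,u]$ lies in $Z_1$ and is therefore central in $G$.

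It then remains only to evaluate the power. Expanding $[k_0^{n},u]=\prod_{i=0}^{n-1}[k_0,u]^{k_0^{i}}$ with $n=\alpha(k)$ and using that $z=[k_0,u]$ is central, every factor equals $z$, so $[k_0^{n},u]=z^{n}$. Combining this with the first step yields $[k,u]=z^{\alpha(k)}$ with $z\in Z_1$, which is the assertion; consistency modulo $p$ is automatic since $z^{p}=1$ and a change of integer representative of $\alpha(k)$ alters $k_0^{\alpha(k)}$ only by a power of $k_0^{p}\in G^{p}\leq K\cap C$.

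I expect the only genuine subtlety to be the identification $[A,G]=Z_1$. It is precisely this unipotency argument, valid because $\theta$ has order dividing $p$ on a rank-$2$ module, that does the double duty of placing $z=[k_0,u]$ inside $Z_1$ and of rendering $z$ central, so that the power computation becomes trivial. Everything else reduces to routine commutator manipulation together with the surjectivity of $\alpha$ and the description of $\ker\alpha=K\cap C$ furnished by the previous lemma.
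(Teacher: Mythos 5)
Your proof is correct, but it takes a genuinely different route from the paper's. The paper first notes that $|A|=p^2$ and normality force $A\leq\zeta_2(G)$, so that $[k,u]\in Z_1$ for \emph{every} $k\in K$; hence $k\mapsto[k,u]$ is itself a homomorphism $K\to Z_1$ with kernel $K\cap C$, and the lemma drops out by comparing it with $\alpha$: two surjections $K\to\mathbb{Z}_p$ with the same kernel, viewed in $\mathrm{Hom}(K/(K\cap C),\mathbb{Z}_p)\cong\mathbb{Z}_p$, are proportional, so $[k,u]=z_0^{i\alpha(k)}=z^{\alpha(k)}$ with $z=z_0^i$. You instead fix a preimage $k_0$ with $\alpha(k_0)=1$, reduce $[k,u]$ to $[k_0^{\alpha(k)},u]$ via the decomposition $k=k_0^{\alpha(k)}c$ with $c\in K\cap C$, and rederive the containment $[k_0,u]\in Z_1$ by a unipotence argument on $A\cong\mathbb{F}_p^2$ before expanding the power with the commutator identity. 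Your version is more computational but self-contained: it replaces the paper's appeal to $A\leq\zeta_2(G)$ and the proportionality trick by explicit commutator identities plus the observation that a nonzero nilpotent endomorphism of a two-dimensional space has image equal to kernel equal to the fixed space; the paper's version is shorter and yields the centrality of all the values $[k,u]$ at once. Two small blemishes in yours, neither fatal: the reason $\theta\neq 1$ is not that ``$A$ is not central'' but that $k_0\notin C$ (since $\alpha(k_0)=1$ and $\ker\alpha=K\cap C$); and the aside $[A,G]=\operatorname{im}(\theta-1)$ is not justified as stated (it would need $G=\langle k_0\rangle C$), though you only ever use $[k_0,u]\in\operatorname{im}(\theta-1)=Z_1$, which is fine.
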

\begin{proof}
Since $|A|=p^2$, we have $A \leq \zeta_2(G)$.  Therefore  $ [k,u]\in A\cap Z(G) =Z_1 \mbox { for all }k \in K$.  It follows that the relation $k \mapsto [k,u]$ is a homomorphism from $K$ into $Z_1$, and whose kernel is $K \cap C$.
\\ If we fix a non-trivial element $z_0$ of $Z_1$, we obtain a homomorphism $ \beta : K \rightarrow \mathbb{Z}_p$ with $ [k,u]={z_0}^{\beta (k)}$. 
\\Now we have two homomorphisms $\alpha$ and $\beta$ defined from $K$ onto $\mathbb{Z}_p$ and whose kernels are equal to  $K \cap C$.  We can identify them to their induced homomorphisms in $Hom(K/(K \cap C),\mathbb{Z}_p)$, which is isomorphic to $\mathbb{Z}_p$.  Hence there exists a positive integer $i$ such that $ \beta= i \alpha$, and so we have for $z=z_0^i $, $$[k,u]=z_0^{\beta (k)}=z^{\alpha (k)} \mbox { for all }k \in K$$ 
The result follows.  
\end{proof}
The following lemma may be considered as the key to prove theorem 4.3.
\begin{lemma}
Let be $y \in C- M$  and $x \in \Phi(G)- K$.  Then for every $u \in C- Z_1$, there exists $z \in Z_1$ such that $\delta: G \rightarrow A$ defined by $\delta(kx^jy^i)=z^ju^i$ is a derivation.  Where $k \in K$ and $i,j$ are non-negative integers.

\end{lemma}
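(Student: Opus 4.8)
The plan is to make explicit the normal form behind the formula for $\delta$, and then to reduce the derivation law to a single congruence governing the $x$–coordinate; that congruence carries all the content.

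First I would fix the normal form. Since $C$ and $M$ are distinct maximal subgroups, $M$ is normal with $G/M\cong\mathbb Z_{p}$ and $y\notin M$; by the proof of Lemma 4.4, $K$ is normal in $M$ with $[M:K]=p$, while $x\in\Phi(G)-K\subseteq M-K$ satisfies $x^{p}\in G^{p}\subseteq K$. Hence the $p^{2}$ elements $x^{j}y^{i}$ $(0\le i,j<p)$ form a right transversal of $K$ in $G$, so each $g\in G$ is uniquely $g=kx^{j}y^{i}$ with $k\in K$; as $z^{p}=u^{p}=1$ this makes $\delta\colon G\to A$ well defined. Let $\rho\colon G\to\mathbb Z_{p}$ be the homomorphism with kernel $M$ (Lemma 2.6, normalised by $\rho(y)=1$), so $\rho(g)=i$; and since $x,y\in C$ and $G=KC$ (compare indices: $[G:K\cap C]=[G:K][G:C]$), the map $h=k'x^{j'}y^{i'}\mapsto\alpha(k')$ is exactly the homomorphism $\chi\colon G\to\mathbb Z_{p}$ with kernel $C$ (normalised so that $\chi|_{K}=\alpha$). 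Write $\sigma(g)=j$ for the $x$–exponent, so that $\delta(g)=z^{\sigma(g)}u^{\rho(g)}$.

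Next I would compute $\delta(g)^{h}\delta(h)$ inside the abelian group $A$. Because $A\le\zeta_{2}(G)$ with $[A,G]\le Z_{1}\le Z(G)$, the map $[u,-]\colon G\to Z_{1}$ is a homomorphism; as $x,y\in C=C_{G}(A)$ kill $u$, Lemma 4.5 gives $[u,h]=[u,k']=z^{-\alpha(k')}=z^{-\chi(h)}$ for the very $z$ it furnishes. Therefore
$$\delta(g)^{h}\delta(h)=z^{\sigma(g)}u^{\rho(g)}[u,h]^{\rho(g)}\,z^{\sigma(h)}u^{\rho(h)}=z^{\,\sigma(g)+\sigma(h)-\rho(g)\chi(h)}\;u^{\,\rho(g)+\rho(h)}.$$
Since $\rho$ is a homomorphism, $\delta(gh)=z^{\sigma(gh)}u^{\rho(g)+\rho(h)}$, so $\delta$ is a derivation if and only if
$$\sigma(gh)\equiv\sigma(g)+\sigma(h)-\rho(g)\chi(h)\pmod p.\qquad(\star)$$
The heart of the argument is $(\star)$. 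Let $\pi\colon M\to M/K\cong\mathbb Z_{p}$ be the projection with $\pi(x)=1$; it is a homomorphism as $K$ is normal in $M$, and $\sigma(g)=\pi\!\big(gy^{-\rho(g)}\big)$. Writing $a=\rho(g),\,b=\rho(h)$ and using additivity of $\pi$, I get $\sigma(g)+\sigma(h)-\sigma(gh)=\pi\big((ghy^{-a-b})^{-1}(gy^{-a}hy^{-b})\big)$, and a direct manipulation identifies the argument as ${}^{y^{b}}[y^{-a},h]$. Conjugation by $y^{b}$ is trivial modulo $K$ on $\gamma_{2}(G)$ since $[\gamma_{2}(G),y]\le\gamma_{3}(G)\le K\cap C\le K$, and modulo $\gamma_{3}(G)\le K$ commutators are biadditive, so
$$\pi\big({}^{y^{b}}[y^{-a},h]\big)=\pi[y^{-a},h]=-a\big(\pi[y,k']+j'\,\pi[y,x]\big)=a\,\alpha(k')=a\,\chi(h),$$
using $\pi[y,k']=-\alpha(k')$ (Lemma 4.4) and $\pi[y,x]=0$; this is precisely $(\star)$.

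The decisive point — and the step I expect to be the main obstacle — is the vanishing $\pi[y,x]=0$, i.e. $[x,y]\in K$, which is exactly why one insists on $x\in\Phi(G)$ rather than merely $x\in M-K$. I would argue it through the section $\bar G=G/(K\cap C)$ (writing $\overline{\,\cdot\,}$ for reduction modulo $K\cap C$): it has order $p^{3}$ and exponent $p$ (as $G^{p}\le K\cap C$) and is nonabelian (as $K/(K\cap C)$ is a non–normal subgroup of order $p$), hence extraspecial with $\gamma_{2}(\bar G)=Z(\bar G)$ of order $p$; since $\overline{\Phi(G)}=\overline{\gamma_{2}(G)G^{p}}=\gamma_{2}(\bar G)=Z(\bar G)$, the image $\bar x$ is central, forcing $[x,y]\in K\cap C\subseteq K$. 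Granting this, the commutator bookkeeping above closes the proof; the only mild care needed elsewhere is the sign normalisation in Lemma 4.4 and the identification $\chi(h)=\alpha(k')$, both routine.
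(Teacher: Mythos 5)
Your proof is correct, and it reaches the same two computations as the paper --- the right-hand side $\delta(g)^{h}\delta(h)=z^{\sigma(g)+\sigma(h)-\rho(g)\chi(h)}u^{\rho(g)+\rho(h)}$, obtained from $x,y\in C=C_G(A)$ together with Lemma 4.5, and the matching $x$-exponent of $gh$ in normal form --- but your derivation of the exponent identity is executed differently. The paper performs an explicit collection: it writes $gg'=kx^{j}[y^{-i},x^{-j'}k'^{-1}]k'x^{j'}y^{i+i'}$, expands the commutator by the standard identities, pushes all error terms into $K$ using $\gamma_2(C)\le K\cap C$ and $\gamma_3(G)\le K\cap C$, and reads off the crucial factor $x^{-i\alpha(k')}$ from Lemma 4.4, thereby exhibiting the normal form of the product directly. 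You instead linearize everything through the homomorphisms $\rho$, $\chi$, $\pi$, reduce the derivation law to the single congruence $(\star)$, and evaluate $\sigma(g)+\sigma(h)-\sigma(gh)=\pi\bigl({}^{y^{b}}[y^{-a},h]\bigr)$ by biadditivity of commutators modulo $\gamma_3(G)\le K$; this is the same underlying collection, but packaged so that each step is an application of a homomorphism rather than a rearrangement of a word, which makes the signs and error terms easier to audit (at the cost of not displaying the $K$-component of the product, which the paper's version does give). Two smaller points of comparison: for the key vanishing $[x,y]\in K$, your detour through the extraspecial quotient $G/(K\cap C)$ is correct but heavier than needed --- the paper's route, namely $x,y\in C$ (since $\Phi(G)\le C$) and $\gamma_2(C)\le K\cap C$ because $C/(K\cap C)$ has order $p^{2}$ and is therefore abelian, gives it in one line; and, exactly like the paper's own proof, you silently read the statement's $u\in C-Z_1$ as $u\in A-Z_1$, which is what Lemma 4.5 requires and is evidently the intended hypothesis.
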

\begin{proof}
Let $z$ be as defined in lemma 4.5.

  First we show that $\delta$ is a well defined map.  Clearly, every element $g \in G$ can be written as $g = kx^jy^i$, where  $ k \in K$  and $ i,j \in \mathbb{N} $.  If  $g = kx^jy^i = k'x^{j'}y^{i'} $, then $y^i=y^{i'} \mbox { mod } M$.  Thus $ p$ divides $ i-i'$, that is $ i-i'=pl$ for some integer $l$.  We have $ kx^jy^{pl} = k'x^{j'} $.  Since $ y^{pl} \in G^p \leq K$, we have  $ x^j = x^{j'} \mbox { mod } K $, thus $p$ divides $j-j'$.  Since $A$ is elementary abelian, we have $u^{i-i'}=z^{j-j'}=1$, and so $z^ju^i=z^{j'}u^{i'}$.  
 \\ Now we prove that $\delta$ is a derivation.  Let $g= kx^jy^i$ and $g'= k'x^{j'}y^{i'} $.  We have
\begin{eqnarray}
gg'&=& kx^j[y^{-i},x^{-j'} k'^{-1}]k'x^{j'}y^{i+i'}\nonumber\\ 
 &=&  kx^j [y^{-i}, k'^{-1}][y^{-i},x^{-j'}]^{k'^{-1}}k'x^{j'}y^{i+i'}\nonumber
\end{eqnarray}
since $K \cap C$ is maximal in $K$, $K \cap C$ has index $p^2$ in $C$.  And since $K \cap C$ is normal in $G$, we have  $\gamma_2(C) \leq K \cap C $. 
\\ Thus $k_1 = [y^{-i},x^{-j'}]^{k'^{-1}} \in   K \cap C$.
\\Also, we have 
\begin{eqnarray}
 [y^{-i}, k'^{-1}] &=& \prod_{r=i-1}^{0} [y^{-1}, k'^{-1}]^{y^{-r}}\nonumber\\ 
                            &=&  \prod_{r=i-1}^{0} [y^{-1}, k'^{-1}] [y^{-1}, k'^{-1},{y^{-r}}]\nonumber\\ 
                            &=& [y^{-1}, k'^{-1}]^i k_2 \nonumber  
\end{eqnarray}
  for some $k_2 \in K$
\\And we have   
\begin{eqnarray}
[y^{-1}, k'^{-1}]^i &=& ([y, k'^{-1}]^{y^{-1}})^{-i}= ([k'^{-1},y]^i)^{y^{-1}}\nonumber\\ 
 &=& [k'^{-1},y]^i [[k'^{-1},y]^i,y^{-1}] \nonumber\\  
&=& x^{-i  \alpha (k')}k_3 \nonumber
\end{eqnarray}
 for some $k_3 \in K$.  It follows that
\begin{eqnarray}
gg'&=& kx^j  x^{- \alpha (k')}k_3 k_2 k_1 k'x^{j'}y^{i+i'}\nonumber\\
&=&  k_4 x^{j+j'- i\alpha (k')}y^{i+i'}  \nonumber
\end{eqnarray}
for some $k_4 \in K$.  And thus $ \delta (gg')= z^{j+j'- i \alpha (k')}u^{i+i'} $
\\ On the other hand,
$$ \delta (g)^{g'} \delta (g')=(z^ju^i)^{ k'x^{j'}y^{i'}}z^{j'}u^{i'}$$
using the fact that $x^{j'}y^{i'} \in C_G(A) = C$, we obtain

\begin{eqnarray}
\delta (g)^{g'} \delta (g') &=& (z^ju^i)^{ k'}z^{j'}u^{i'} \nonumber\\
&=& z^j(u^{ k'})^iz^{j'}u^{i'} \nonumber\\
&=&  z^ju^i [u, k']^i z^{j'}u^{i'}   \nonumber
\end{eqnarray}
by lemma 3.5, $[u,k']^i= z^{-i  \alpha (k')}$, hence 
$$ \delta (g)^{g'} \delta (g')= z^{j+j'- i \alpha (k')}u^{i+i'}.$$
The result follows.

\end{proof}
\begin{proof}[Proof of theorem 4.3]
First we have to show that if $G$ is full with respect to $C=C_G(A)$, then the  following sequence is exact.  
 $$  Der(G,A) \rightarrow Hom(G/Z_1,A/Z_1)\rightarrow 0$$ 
  Let $f \in Hom(G/Z_1,A/Z_1)$.  If $f=0$, then it can be lifted to the trivial derivation in $ Der(G,A)$.  Hence we shall assume that $f \neq 0$.  Since $|A/Z_1|=p$, the kernel of $f$ is a maximal subgroup in $G/Z_1$, and so have the form $ M/Z_1$, where $M$ is a maximal subgroup in $G$.  we have to examine the two cases:
\begin{enumerate}  
\item $M \neq C$
\\Let $y \in C-M$, and let $u \in A-Z_1$ such that $ uZ_1 = f(yZ_1)$.  Let $K$ be a subgroup of $M$ satisfying the conditions (1) and (2) of definition 4.1, and let $ x \in \phi(G)-K$.  By lemma 4.6,  there exists $z \in Z_1$ such that $\delta: G \rightarrow A$ defined by $\delta(kx^jy^i)=z^ju^i$ is a derivation, and clearly $f$ is induced by this derivation. 

\item $M=C$. 
Let $t \in G-C$, and let $u \in A-Z_1$ such that $ uZ_1 = f(tZ_1)$.  It follows from the identity $$ (xu)^p=x^pu^p[x,u]^{\binom{p}{2}} \mbox{ for all } x \in G$$
 that $$u^{1+t+...+t^{p-1}}=1$$
  It's straightforward now, to check that $$\delta ( t^im)= u^{1+t+...+t^{i-1}}$$
is derivation from $G$ into $A$, which induces $f$.
\end{enumerate}
Conversely, assume that the preceeding sequence is exact.  Let $M \neq C$ be a maximal subgroup in $G$.  By lemma 2.7, $M/Z_1$ is maximal in $G/Z_1$.  And by lemma 2.6, $M/Z_1$ is the kernel of a homomorphism $r :G/Z_1 \rightarrow \mathbb{Z}_p$.  If $u \in A-Z_1$, we can define a homomorphism $f :G/Z_1 \rightarrow A/Z_1$ with $$f(xZ_1)=(uZ_1)^{r(xZ_1)}$$ 
This homomorphism has kernel  $M/Z_1$, and can be lifted, by our assumption, to a derivation $\delta: G \rightarrow A$.
Consider $K= ker \delta = \{ x\in G, \delta(x)=1\}$.
\\We have $K \leq M$, otherwise $G=MK$ and thus $G/Z_1 \leq ker(f)$, a contradiction.  Thus $K$ coincides with the kernel of the restriction  $\delta/M: M \rightarrow A$.  Clearly, $\delta$ maps $M$ to $Z_1$, and so  $\delta/M: M \rightarrow Z_1$ is a homomorphism of group.  This imply that $K$ has index at most $p^2$ in $G$.
\\ If $K$ is normal in $G$ then $\gamma_2(G) \leq K$.  Let $y \in C-M$ such that $\delta(y)=u$,with $ u \in A-Z_1$.  Then for every $m\in M$ we have $$ \delta (my)= \delta (m)^{y} \delta (y)= \delta (m)u.$$  On the other hand we have  $$ \delta (my)= \delta (ym[m,y])= \delta (ym)^{[m,y]} \delta ([m,y])= \delta (ym)=u^m \delta (m) $$ 
This imply that $u^m=u$, for all $m\in M$, hence $M \leq C_G(u)=C$, a contradiction.  Thus $K$ is not normal in $G$, and has index exactely $p^2$ in $G$.
\\It's straightforward to chek that $\delta(k^g)=1$, for all $k\in K\cap C$ and $g \in G$.  This imply that $ K\cap C \lhd G$.  And finally we have $G^p \leq K$, since for every $x\in G$ we have
\begin{eqnarray}
 \delta(x^p) &=& \prod_{r=o}^{p-1} \delta(x)^{x^i} =  \prod_{r=o}^{p-1} \delta(x) [\delta(x),x^i]  \nonumber\\
                     &=& \delta(x)^p [\delta(x),x]^{\binom{p}{2}}=1. \nonumber                      
  \end{eqnarray}
The theorem follows.
   \end{proof}
\begin{cor}
Let $G$ be a $p$-group which is full with respect to all its maximal subgroups and has cyclic center.  Then for every elementary abelian normal subgroup $A$ of $G$ such that $A \leq \zeta_2(G)$, the sequence
 $$ 0  \rightarrow Hom(G,Z_1)  \rightarrow Der(G,A) \rightarrow Hom(G/Z_1,A/Z_1)\rightarrow 0$$
is exact. Where $Z_1 = A \cap \zeta(G)$. 
\end{cor}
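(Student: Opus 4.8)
The plan is to reduce the statement to the surjectivity of the right-hand map, and then to reduce that surjectivity to the rank-$2$ situation already settled in Theorem 4.3. First I would record the basic numerology. Since $A$ is a nontrivial normal subgroup of the $p$-group $G$ it meets the centre nontrivially, so $Z_1=A\cap\zeta(G)$ is a nontrivial elementary abelian subgroup of the cyclic group $\zeta(G)$; hence $|Z_1|=p$. As $A\leq\zeta_2(G)$ and $A$ is normal we get $[A,G]\leq A\cap\zeta(G)=Z_1$, so $A/Z_1$ is central in $G/Z_1$. Consequently $Der(G,Z_1)=Hom(G,Z_1)$ and $Der(G/Z_1,A/Z_1)=Hom(G/Z_1,A/Z_1)$. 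Moreover $\delta(Z_1)\subseteq Z_1$ for every $\delta\in Der(G,A)$: if $z\in Z_1$ and $g\in G$, then comparing $\delta(zg)=\delta(z)^g\delta(g)$ with $\delta(gz)=\delta(g)\delta(z)$ (using that $z$ acts trivially on $A$) gives $\delta(z)^g=\delta(z)$, so $\delta(z)\in A\cap\zeta(G)=Z_1$. Thus Lemma 2.5 applies with $B=Z_1$ and yields exactness everywhere except possibly at the right-hand term, so it remains only to show that $Der(G,A)\to Hom(G/Z_1,A/Z_1)$ is onto.

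Before that I would check that $G$ is purely non-abelian, as required by Theorem 4.3. Being full with respect to a maximal subgroup, $G$ is not powerful by the remark following Definition 4.1, hence non-abelian. If $G=B\times D$ had a nontrivial abelian direct factor $B$, then $\zeta(G)=B\times\zeta(D)$ would be cyclic; since a cyclic $p$-group is indecomposable this forces $\zeta(D)=1$, i.e. $D=1$ and $G=B$ abelian, a contradiction. Hence $G$ is purely non-abelian.

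For the surjectivity, fix $f\in Hom(G/Z_1,A/Z_1)$ and choose a basis $\bar u_1,\dots,\bar u_k$ of the elementary abelian group $A/Z_1$, with $u_i\in A\setminus Z_1$. Writing $f=\sum_i f_i$ with $f_i:G/Z_1\to\langle\bar u_i\rangle$, and using that the derivations into $A$ form an additive group, it suffices to lift each $f_i$ to a derivation $G\to A$. For this I set $A_i=\langle u_i\rangle Z_1$. Since $[u_i,G]\leq Z_1\leq A_i$, the subgroup $A_i$ is normal, elementary abelian of rank $2$, contained in $\zeta_2(G)$, non-central (as $u_i\notin\zeta(G)$), and $A_i\cap\zeta(G)=Z_1$. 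The decisive point is that $C_i:=C_G(A_i)=C_G(u_i)$ is maximal: the map $g\mapsto[u_i,g]$ is a homomorphism $G\to Z_1\cong\mathbb{Z}_p$ (its values are central, as $[u_i,G]\leq Z_1\leq\zeta(G)$) with kernel $C_i$ and nontrivial image, whence $|G:C_i|=p$. Because $G$ is full with respect to every maximal subgroup, it is full with respect to $C_i$, and Theorem 4.3 applies to $A_i$ with the same $Z_1$: the sequence $0\to Hom(G,Z_1)\to Der(G,A_i)\to Hom(G/Z_1,A_i/Z_1)\to 0$ is exact. In particular $f_i\in Hom(G/Z_1,A_i/Z_1)$ lifts to some $\delta_i\in Der(G,A_i)\subseteq Der(G,A)$, and $\delta=\sum_i\delta_i$ is then a derivation lifting $f$, which gives the required surjectivity.

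I expect the main obstacle to be the verification that $C_G(A_i)$ is genuinely maximal and that all hypotheses of Theorem 4.3 are inherited by the rank-$2$ section $A_i$ (purely non-abelian, rank $2$, the matching intersection $A_i\cap\zeta(G)=Z_1$, and fullness with respect to $C_i$); once these are in place the cyclic-centre hypothesis does all the work, and the additivity step recombining the partial lifts $\delta_i$ into $\delta$ is routine.
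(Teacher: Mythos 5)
Your proof is correct and follows essentially the same route as the paper: decompose $A/Z_1$ into rank-one summands, realize each as $A_i/Z_1$ for a rank-2 elementary abelian normal subgroup $A_i$, lift each component via Theorem 4.3, and sum the resulting derivations. In fact your write-up is more careful than the paper's, since you also verify the hypotheses the paper silently assumes (that $G$ is purely non-abelian, that each $A_i$ is non-central with $A_i\cap\zeta(G)=Z_1$, and that $C_G(A_i)$ is genuinely a maximal subgroup so that fullness applies).
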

\begin{proof}
Let be $f \in  Hom(G/Z_1,A/Z_1)$, and $(\bar{v}_1,...,\bar{v}_s)$ a basis for the $\mathbb {Z}_p$-vector space $A/Z_1$.  For each $i \in \overline {1,s}$; let $A_i$ be the subgroup of $A$ defined by $ A_i/Z_1 \cong <\bar{v}_i>$.  It follows imediately that $A_i$ is an elementary abelian normal subgroup of $G$ of rank $2$, and 
$$ Hom(G/Z_1,A/Z_1) \cong \bigoplus_{i}Hom(G/Z_1,A_i/Z_1)$$
Thus $f$ can be written as $f=\sum_{i} f_i$, where $f_i \in Hom(G/Z_1,A_i/Z_1)$.  The above theorem imply that each $f_i$ can be lifted to a derivation $\delta_i : G \rightarrow A_i \subset A$.  It follows that $\delta = \sum_{i} \delta_i :  G \rightarrow A$ is a derivation, which induces $f$. 

\end{proof}
\section{\bf {\bf \em{\bf Non-inner automorphisms of $p$-groups of coclass 2}}}
\vskip 0.4 true cm
The results developped in the previous sections allow us to prove the conjecture of Berkovich for $p$-groups of almost maximal class.
\begin{thm}
If $G$ is a finite $p$-group of coclass 2, $p$ odd, then $G$ has a non-inner automorphism of order $p$.
\end{thm}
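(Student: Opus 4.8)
The plan is to combine the reduction of Deaconescu and Silberberg with the structural results of Sections 3 and 4. Abelian $p$-groups are immediate, since there $Inn(G)=1$ and for $|G|\ge p^2$ the order $p$ divides $|Aut(G)|$, so I may assume $G$ is non-abelian. I would first invoke [4] to assume $G$ is strongly frattinian, i.e. $C_G(Z(\Phi(G)))=\Phi(G)$. Because $Z(\Phi(G))\le\Phi(G)$, this forces $C_G(\Phi(G))\le C_G(Z(\Phi(G)))=\Phi(G)$, so the hypothesis of Theorem 3.1 is in force; I may also assume $G$ is purely non-abelian, since a nontrivial cyclic direct factor yields a non-inner automorphism of order $p$ at once. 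Then $\Omega_1(Z(G))\le\Phi(G)$ by Lemma 2.7, and $H\le Z(\Phi(G))$ is abelian by Lemma 3.2, where $H$ is the full preimage of $\Omega_1(\zeta_2(G)/\zeta_1(G))$.

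Next I would exploit the coclass hypothesis through the generator number: a $p$-group of coclass $2$ with $p$ odd satisfies $d(G)\le 3$ (a standard consequence of the coclass hypothesis), so $G$ is $2$- or $3$-generated. If $G$ is powerful I finish by Abdollahi's theorem [2]. If $d(G)=2$ and $G$ is not powerful, then Proposition 4.2 shows $G$ is full with respect to each of its maximal subgroups, which is precisely the hypothesis that activates the exact sequence of Theorem 4.3.

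The core of the argument is a comparison inside $Aut_A(G)\cong Der(G,A)^\circ$ (Proposition 2.4) between the automorphisms of order $p$ and the inner ones, for a suitably chosen elementary abelian normal subgroup $A\le\zeta_2(G)$ of rank $2$ with $Z_1=A\cap Z(G)$ of order $p$. If $\Omega_1(\zeta_2(G)/\zeta_1(G))$ is cyclic, then $d(H/\zeta_1(G))\le 1<2\le d(G)\,d(\Omega_1(Z(G)))$, and the criterion recalled at the start of Section 3 already yields a non-inner central automorphism of order $p$; hence I may assume $\Omega_1(\zeta_2(G)/\zeta_1(G))$ has rank at least $2$ and pick $A$ inside it. Granting that $G$ is full with respect to $C=C_G(A)$, Theorem 4.3 makes $0\to Hom(G,Z_1)\to Der(G,A)\to Hom(G/Z_1,A/Z_1)\to 0$ exact, so the projection $\pi:Der(G,A)\to Hom(G/Z_1,A/Z_1)$ is onto. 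The inner derivations in $Der(G,A)$, induced by $\{g\in G:[G,g]\le A\}$, have $\pi$-image a proper subgroup, whereas $\pi$ on the additive $p$-torsion of $Der(G,A)$ is larger; identifying these $p$-torsion derivations with order-$p$ automorphisms via the regular power structure (Theorem 3.1(3), or directly from Lemmas 2.2 and 2.3) then produces a non-inner automorphism of order $p$.

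The main obstacle I expect is the structural input from coclass $2$ needed to license these choices: bounding $d(G)$, locating a rank-$2$ subgroup $A\le\zeta_2(G)$ for which $G$ is full with respect to $C_G(A)$ (Theorem 4.3 or Corollary 4.7), and disposing of the $3$-generated groups not covered by Proposition 4.2, which must be handled by the central-automorphism count or by direct inspection of the few exceptional families. Equally delicate is proving that the $\pi$-image of the inner derivations is genuinely proper---that is, that the derivations supplied by exactness are not all absorbed into $Inn(G)\cap Aut_A(G)$---since this strict inequality is exactly what the existence of a non-inner automorphism of order $p$ comes down to.
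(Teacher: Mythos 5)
Your overall strategy---reduce via Deaconescu--Silberberg, then produce order-$p$ automorphisms inside $Aut_A(G)$ for an elementary abelian normal $A\le \zeta_2(G)$ by combining the exact sequence of Section 4 with the power structure of Theorem 3.1(3)---is essentially the paper's, but the two points you flag as ``obstacles'' are exactly where the theorem lives, and neither is closed in your write-up. The decisive gap is the non-innerness step. You propose to show that the $\pi$-image of the inner derivations is a proper subgroup of $Hom(G/Z_1,A/Z_1)$ and concede that this strict inequality is unproved. The paper settles it by a plain count of orders rather than a comparison of images: exactness gives $|Aut_{H_1}(G)|=|Der(G,H_1)|=|Hom(G,Z_1)|\cdot|Hom(G/Z_1,H_1/Z_1)|=p^2\cdot p^2=p^4$, while any inner automorphism $\tau_g$ lying in $Aut_{H_1}(G)$ satisfies $[G,g]\le H_1\le\zeta_2(G)$, i.e. $g\in\zeta_3(G)$; coclass $2$ forces $|\zeta_1(G)|=p$, $|\zeta_2(G)/\zeta_1(G)|=p^2$ and $|\zeta_3(G)/\zeta_2(G)|=p$ (the quotient $G/\zeta_2(G)$ has maximal class), so the inner automorphisms inside $Aut_{H_1}(G)$ number at most $|\zeta_3(G)/\zeta_1(G)|=p^3<p^4$. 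Since every element of $Aut_{H_1}(G)$ has order at most $p$ by Theorem 3.1(3), a non-inner automorphism of order $p$ follows. The same count works verbatim for your rank-$2$ subgroup $A$ (it gives $|Der(G,A)|=p^4$ against at most $p^3$ inner elements), so your approach is completable, but as written the key inequality is asserted, not proved.

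The second gap is the $3$-generated case, which you defer to ``direct inspection of the few exceptional families'': there is nothing to inspect, and the paper never meets this case because it runs the central-automorphism criterion first and at full strength. Since coclass $2$ gives $|\zeta_2(G)/\zeta_1(G)|\le p^2$, one may assume $d(G)\,d(\zeta(G))=d(\zeta_2(G)/\zeta_1(G))\le 2$ (otherwise the criterion already yields a non-inner central automorphism of order $p$), and this equality forces at once $d(G)=2$, $\zeta(G)$ cyclic, $|\zeta_1(G)|=p$ and $|\zeta_2(G)/\zeta_1(G)|=p^2$---precisely the data your later steps require (cyclic center for Corollary 4.7, $Z_1$ of order $p$, the order of $\zeta_3(G)$). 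Your weaker use of the criterion, only when $\Omega_1(\zeta_2(G)/\zeta_1(G))$ is cyclic, is what leaves $d(G)=3$ dangling. One genuine point in your favour: you invoke [2] to dispose of the powerful case, which is in fact needed before Proposition 4.2 and Corollary 4.7 can be applied, and which the paper passes over in silence.
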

\begin{proof}
Let   $G$ be a finite $p$-group of coclass 2.  Clearaly, we have to assume that $d(G)d(\zeta(G)) = d(\zeta_2(G)/\zeta_1(G))$.  Since $G$ has coclass $2$, $\zeta_2(G)/\zeta_1(G)$ has at most order $p^2$.  So $ d(\zeta_2(G)/\zeta_1(G)) \leq 2$.  This combined with the above equality imply that $|\zeta_2(G)/\zeta_1(G)|=p^2$, $|\zeta_1(G)|=p$ and $d(G)=2$.  So $G$ is $2$-generated and has cyclic center, and by the main result in [4], we have to assume that $C_G(Z(\Phi(G)))=\Phi(G)$, and so $C_G(\Phi(G)) \leq \Phi(G)$.  
\\Let $H$ be  the full inverse image in $G$ of $\Omega_1(inn(G))$ and let denote $H_1 = \Omega_1(H)$.  By corollary 4.7, we have the exact sequence  
 $$ 0  \rightarrow Hom(G,Z_1)  \rightarrow Der(G,H_1) \rightarrow Hom(G/Z_1,H_1/Z_1)\rightarrow 0.$$
This imply that
$$  |Der(G,H_1)|=|Hom(G,Z_1)||Hom(G/Z_1,H_1/Z_1)|$$
We have 
$$|Hom(G,Z_1)|=|Hom(G/\Phi(G),Z_1)|=|Hom(\mathbb{Z}_p \oplus \mathbb{Z}_p,\mathbb{Z}_p)|=p^2 $$
Similarly, we have
  $$|Hom(G/Z_1,H_1/Z_1)| = p^2$$
Therefore, $  |Der(G,H_1)|=p^4$.  Thus $Aut_{H_1}(G)=p^4$.
\\Now, let denote by $\tau_g$ the inner automorphism induced by $g \in G$. If $\tau_g$ lies in $Aut_{H_1}(G)$, then $[x,g] \in H_1 \leq \zeta_2(G)$.  Therefore $g \in \zeta_3(G)$.  Since $\zeta_3(G)$ has order $p^4$, the group of inner automorphisms induced by $\zeta_3(G)$ has order $|\zeta_3(G)/\zeta(G)|=p^3$.  Thus $Aut_{H_1}(G)$ contains non-inner automorphisms.  The theorem follows now from theorem 3.1, 3.   

\end{proof}

\vskip 0.4 true cm

\begin{center}{\textbf{Acknowledgments}}
\end{center}
This work is done under the supervision of Professor Bounabi Daoud, We are grateful to him for many valuable comments.  The first author is indebted to Professor C. Leedham-Green for his stimulating suggestions.
\vskip 0.4 true cm


References


\bibliographystyle{model1a-num-names}
\bibliography{<your-bib-database>}



\end{document}